\newtheorem{theorem}{Theorem}[section]
\newtheorem{prop}[theorem]{Proposition}
\newtheorem{lemma}[theorem]{Lemma}
\newtheorem*{claim*}{Claim}
\theoremstyle{remark}
\newtheorem{remark}{Remark}[section] 
\theoremstyle{definition}
\newtheorem{defin}[remark]{Definition}
\newtheorem*{defin*}{Definition}
\numberwithin{equation}{section}
\DeclareMathAlphabet{\matheur}{U}{eur}{m}{n}
\DeclareMathAlphabet{\matheus}{U}{eus}{m}{n}
\DeclareMathAlphabet{\matheuf}{U}{euf}{m}{n}
\newcommand{\abs}[1]{\left\lvert#1\right\rvert}
\newcommand{\vol}{\mathit{Vol}}
\DeclareMathOperator{\Id}{Id}
\DeclareMathOperator{\dist}{dist}
\DeclareMathOperator{\rank}{rank}
\DeclareMathOperator{\grad}{grad}
\DeclareMathOperator{\diag}{diag}
\author{Ravil Gabdurakhmanov}
\address{School of Mathematics, University of Leeds, Leeds LS2 9JT, UK}
\email{\tt mmrg@leeds.ac.uk}
\author{Gerasim Kokarev}
\address{School of Mathematics, University of Leeds, Leeds LS2 9JT, UK}
\email{\tt G.Kokarev@leeds.ac.uk}
\subjclass[2010]{35R30, 58J60, 35J47, 53C05.} 
\keywords{Calder\'on problem, Dirichlet-to-Neumann map, connection Laplacian.}
\title{On Calderon's problem for the connection Laplacian}
\date{}
\begin{document}

\begin{abstract}
We consider Calder\'on's problem for the connection Laplacian on a real-analytic vector bundle over a manifold with boundary. We prove a uniqueness result for this problem when all geometric data are real-analytic, recovering the topology and geometry of a vector bundle up to a gauge transformation and an isometry of the base manifold.  
\end{abstract}

\maketitle
\section{Statement and discussion of results}
\subsection{Introduction}
The purpose of this paper is to prove a uniqueness result for the Calder\'on inverse problem for the connection Laplacian on a vector bundle. Our main hypothesis is that the geometry of a vector bundle, that is a connection, a compatible inner product, and a Riemannian metric on the base manifold, are real-analytic. This Calder\'on problem is motivated by the Aharonov-Bohm effect that says that different gauge equivalence classes of electromagnetic potentials have different physical effects that can be detected by experiments. Thus, our uniqueness result shows that different gauge equivalence classes of connections have different boundary data, that is such classes are detectable by boundary measurements.

We discuss some of the related literature on this problem in due course, but now say a few words about the classical Calder\'on problem. Recall that the anisotropic Calder\'on problem asks whether one can read off the conductivity matrix of a medium from electrical voltage and current measurements on the boundary. In dimension greater than two this problem is equivalent to recovering a Riemannian metric on a compact manifold with boundary from the Dirichlet and Neumann data of harmonic functions. A classical result by Lassas and Uhlmann~\cite{LU01}, see also~\cite{LTU03,LLS20}, solves this problem when a Riemannian metric is real-analytic. In more detail, it says that the topology and geometry of a real-analytic Riemannian manifold with boundary can be recovered from the Dirichlet-to-Neumann map for the Laplace-Beltrami operator. The main result of this paper can be viewed as a version of the Lassas-Uhlmann theorem in the setting of vector bundles, which allows us to recover additional topological and geometric structures. We proceed with the statement of related hypotheses and conclusions in more detail.

\subsection{Main result}
Let $(M_i,g_i)$, where $i=1,2$, be two connected compact Riemannian manifolds with boundary, and let $E_i$ be vector bundles over $M_i$. We assume that each $E_i$ is equipped with a connection $\nabla^i$ and a {\em Euclidean structure}, that is a compatible inner product $\langle\cdot,\cdot\rangle_{E_i}$ in the sense that 
$$
D_X\langle u,v\rangle_{E_i}=\langle\nabla^i_Xu,v\rangle_{E_i}+\langle u,\nabla^i_X v\rangle_{E_i}
$$
for any smooth sections $u$ and $v$ of $E_i$, and any vector field $X$ on $M_i$. For open subsets $\Sigma_i\subset\partial M_i$ we denote by $\Lambda_{\Sigma_i}$ the corresponding Dirichlet-to-Neumann maps defined on compactly supported sections of $\left.E_i\right|_{\Sigma_i}$ by the taking the normal derivative of the harmonic extension, that is the solution of the equation
$$
\Delta^{E_i}u=0,\qquad \left.u\right|_{\partial M_i}=s,
$$
where $s$ is a section of $\left.E_i\right|_{\partial M_i}$ supported in $\Sigma_i$, and $\Delta^{E_i}$ is the connection Laplacian on $E_i$, see Section~\ref{prems} for details.

Let $\phi:\left.E_1\right|_{\Sigma_1}\to\left.E_2\right|_{\Sigma_2}$ be a morphism of vector bundles that covers a diffeomorphism $\psi:\Sigma_1\to\Sigma_2$, that is $\pi_2\circ\phi=\psi\circ\pi_1$, where $\pi_i$ is the projection map for $E_i$, $i=1,2$. We say that such a morphism $\phi$ {\em intertwines} with the Dirichlet-to-Neumann maps $\Lambda_{\Sigma_i}$ if 
$$
\phi\circ\Lambda_{\Sigma_1}(s)\circ\psi^{-1}=\Lambda_{\Sigma_2}(\phi\circ s\circ\psi^{-1})
$$
for any smooth section $s$ of $\left.E_1\right|_{\Sigma_1}$. Recall that a vector bundle isomorphism is called {\em Euclidean}, if it preserves Euclidean structures. Our main result is the following theorem.
\begin{theorem}
\label{t1}
Let $(M_i,g_i,E_i,\nabla^i)$, where $i=1,2$, be two Euclidean real-analytic vector bundles defined over connected compact real-analytic Riemannian manifolds with boundary, equipped with real-analytic connections. Suppose that $\dim M_i\geqslant 3$ for each $i=1,2$, and for some open subsets $\Sigma_i\subset\partial M_i$ there exists a real-analytic Euclidean vector bundle isomorphism $\phi:\left.E_1\right|_{\Sigma_1}\to\left.E_2\right|_{\Sigma_2}$ that intertwines with the corresponding Dirichlet-to-Neumann operators $\Lambda_{\Sigma_1}$ and $\Lambda_{\Sigma_2}$. Then the vector bundles $E_1$ and $E_2$ are isomorphic, and moreover, there exists a real-analytic Euclidean vector bundle isomorphism $\Phi:E_1\to E_2$ that covers an isometry $\Psi:(M_1,g_1)\to (M_2,g_2)$, such that $\Phi^*\nabla^2=\nabla^1$ and $\left.\Phi\right|_{\Sigma_1}=\phi$.
\end{theorem}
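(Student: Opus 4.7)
The plan is to adapt the strategy of Lassas and Uhlmann~\cite{LU01} for the scalar anisotropic Calder\'on problem to the bundle-valued setting, organised in three stages: a boundary recovery of the geometric data in a collar of $\Sigma_1$, a propagation of the resulting local identification into the interior by real-analytic continuation of bundle-valued Green's sections, and a verification that the global object produced has all the required algebraic properties.

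First I would work in collar neighbourhoods $[0,\varepsilon)\times\Sigma_i$ of each $\Sigma_i$, using boundary normal coordinates on the base and a gauge of $E_i$ obtained by parallel transport of a Euclidean frame along the inward normal geodesics. A standard full symbol calculation for the DtN pseudodifferential operator $\Lambda_{\Sigma_i}$ in such a gauge expresses its homogeneous components in terms of the boundary Taylor coefficients of $g_i$ and of the connection one-form of $\nabla^i$. The intertwining identity with $\phi$ and $\psi$ equates these symbol expansions, giving equality of boundary jets, and real-analyticity in boundary normal coordinates promotes this to honest equalities $\psi^{\ast}g_2=g_1$ and $\phi^{\ast}\nabla^2=\nabla^1$ on the collar, yielding a real-analytic Euclidean bundle isomorphism $\Phi_0$ from a collar of $\Sigma_1$ onto a collar of $\Sigma_2$ that extends $\phi$.

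Next I would globalise $\Phi_0$ using the bundle-valued Green's kernel. For each interior point $x\in M_i^{\circ}$ the connection Laplacian $\Delta^{E_i}$ has a Green's kernel $G^i(\cdot,x)$ that is real-analytic off the diagonal, and its restriction to $\Sigma_i$ defines a real-analytic linear map $E_{i,x}\to C^{\omega}(\Sigma_i;E_i|_{\Sigma_i})$ depending real-analytically on $x$. As in the scalar Lassas-Uhlmann construction, this assignment separates interior points by the singularity behaviour of $G^i$ together with unique continuation, and it records the full fibre structure of $E_i$ over $x$. The collar identification from the previous stage, combined with the intertwining hypothesis, implies that the two Green's embeddings have the same image after composition with $\phi$, producing a bijection $\Psi\colon M_1^{\circ}\to M_2^{\circ}$ covered by a fibre-wise Euclidean isomorphism $\Phi$. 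Real-analyticity then shows that $\Psi$ extends to an isometry of the closed manifolds and that $\Phi$ extends to a real-analytic bundle isomorphism satisfying $\Phi^{\ast}\nabla^2=\nabla^1$ and $\Phi|_{\Sigma_1}=\phi$, since these identities already hold on the collar.

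The main obstacle will lie in the second stage, namely showing that the fibre-wise identification produced by the Green's embedding is genuinely single-valued. In the scalar problem this reduces to injectivity of the boundary trace map on harmonic extensions; in the bundle case one must additionally rule out monodromy of the analytic continuation of $\Phi_0$ along non-trivial loops in $M_1$, since the bundle automorphism group of $E_i$ is non-trivial. This is handled by observing that any Euclidean bundle automorphism of $E_i$ which restricts to the identity on the collar and preserves $\nabla^i$ is determined globally by parallel transport of its value at a single point, and hence must be the identity by real-analytic continuation from the collar. Once path-independence is established, the global topological isomorphism of $E_1$ and $E_2$ is a consequence of the existence of $\Phi$, and the remaining naturality statements are immediate from the validity on the collar and real-analyticity.
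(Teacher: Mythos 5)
Your proposal follows the same broad strategy as the paper -- symbol-level boundary recovery followed by an embedding into a function space via Green kernels -- but there are two concrete issues in the second (and crucial) stage.

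First, the restriction of the Dirichlet Green kernel $G^i(x,\cdot)$ to $\Sigma_i\subset\partial M_i$ vanishes identically, since the kernel satisfies the Dirichlet boundary condition; so the map $E_{i,x}\to C^\omega(\Sigma_i;E_i|_{\Sigma_i})$ you describe is the zero map and cannot separate points or record fibre structure. The way the paper handles this (following Lassas--Taylor--Uhlmann) is to glue a small external half-ball $U$ onto $M_i$ across $\Sigma_i$ to get a larger real-analytic manifold $\tilde M_i$ with an extended bundle $\tilde E_i$, take the Dirichlet Green kernel on $\tilde M_i$, and restrict it to the \emph{exterior} region $U$. The intertwining of the DtN maps is then upgraded to the identity $\tilde G_1=\tilde G_2$ on $U\times U$ by a direct uniqueness argument (solving the Dirichlet problem and observing the matching Cauchy data yields a weak solution across $\Sigma$). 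This is what makes the embedding $\mathcal G_i:\tilde E_i\to W^{\ell,2}(\mathcal E)$ nondegenerate, and it is not a cosmetic modification of your ``restriction to $\Sigma_i$''.

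Second, the sentence ``the collar identification, combined with the intertwining hypothesis, implies that the two Green's embeddings have the same image'' is precisely the statement that requires work; as written it replaces the proof by its conclusion. The paper proves it by a connectedness argument: one takes the maximal open set $B_1\subset\tilde M_1$ on which, for each $x$, there is a unique $j(x)\in\tilde M_2$ with $\Phi\circ\mathcal G_1((\tilde E_1)_x)=\mathcal G_2((\tilde E_2)_{j(x)})$, and shows that $B_1$ is nonempty (it contains the collar), that it has no interior boundary points, and hence equals $\tilde M_1$. The hard part is the ``no interior boundary points'' step, which needs a compactness/non-degeneracy lemma (the limiting fibre does not collapse to the boundary of $\tilde M_2$), a local graph representation of the images with real-analytic dependence, and a lemma ruling out that a single fibre of $\mathcal G_1$ could split across two distinct fibres of $\mathcal G_2$. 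Because $j(x)$ is defined set-theoretically via images of fibres and uniqueness is forced by injectivity of $\mathcal G_2$, the construction is automatically single-valued, and the monodromy of an analytic continuation of $\Phi_0$ simply never arises. Your observation about $\nabla$-preserving Euclidean automorphisms fixing the collar being trivial is true, but using it to control monodromy presupposes that the continuation exists along every path without running into the boundary or degenerating, and also that the composite of two continuations assembles to a \emph{global} automorphism of $E_1$; neither is addressed, and both would reintroduce exactly the difficulties that the paper's open-and-closed formulation is designed to avoid.
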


We note that the presence of Euclidean structures on vector bundles $E_i$ in Theorem~\ref{t1} plays an auxiliary, but important role. In more detail, neither the connection Laplacian nor the associated Dirichlet-to-Neumann operator depend on them, see Section~\ref{prems}. On the other hand, we do not know whether the assumption that the isomorphism $\phi$ in Theorem~\ref{t1} is Euclidean, and the conclusion that so is its extension $\Phi$, can be dropped.

It is an open problem whether the conclusions in Theorems~\ref{t1} hold for arbitrary smooth geometric data, connections and Riemannian metrics, on vector bundles. Under different conditions a similar problem has been considered by Ceki\'c~\cite{Cek17,Cek20}.  Let us also mention that in~\cite{KOP18} the authors consider the Calder\'on problem for the wave operator of the connection Laplacian on Hermitian vector bundles, and obtain conclusions similar to the ones in Theorem~\ref{t1} without any hypotheses on the geometry of vector bundles. 

Note that the hypothesis on the dimension of the base manifolds $M_i$ in our results is essential for the conclusions to hold. In dimension two the connection Laplacian behaves differently when a Riemannian metric on the base changes conformally, see Section~\ref{prems}, and the Riemannian metric on the base can not be recovered. However, as our next theorem shows, a vector bundle with a connection are still determined by the corresponding Dirichlet-to-Neumann operator. To state this result we first need to fix more specific notation.

As is well-known, the conformal class of any smooth metric $g$ on a compact connected surface $M$ determines a real-analytic structure, that is an atlas formed by isothermal charts of $g$. We say that a pair $(M,c)$, where $c$ is a conformal class of Riemannian metrics on $M$, is a {\em real-analytic surface with boundary}, if the real-analytic structure on $M$ is determined by the conformal class $c$, and the boundary is real-analytic with respect to this structure. Let $E_i$, where $i=1,2$, be vector bundles over $M$. Suppose that for an open set $\Sigma\subset\partial M$ there is a morphism of vector bundles $\phi:\left.E_1\right|_{\Sigma}\to\left.E_2\right|_{\Sigma}$  that covers the identity map on the base, that is $\pi_2\circ\phi=\pi_1$, where $\pi_i$ is the projection map for $E_i$, $i=1,2$. Similarly to the notation used above, we say that such a morphism $\phi$ {\em intertwines} with the Dirichlet-to-Neumann maps $\Lambda_{i,\Sigma}$ {\em relative to the conformal class $c$}, if 
$$
\phi\circ\Lambda_{1,\Sigma}(s)=\Lambda_{2,\Sigma}(\phi\circ s)
$$
for any smooth section $s$ of $\left.E_1\right|_{\Sigma}$, where the Dirichlet-to-Neumann operators are defined using some metric $g\in c$. It is straightforward to see that this relation does not depend on a metric $g$ in a fixed conformal class $c$, used to define the Dirichlet-to-Neumann operators, but depends on the conformal class $c$. The following theorem is the version of our main result for vector bundles over surfaces.

\begin{theorem}
\label{t2}
Let $(M,c)$ be a compact connected real-analytic surface with boundary, and let $E_1$ and $E_2$ be two Euclidean real-analytic vector bundles over $M$ equipped with real-analytic connections $\nabla^1$ and $\nabla^2$ respectively. Suppose that for some open subset $\Sigma\subset\partial M$ there exists a real-analytic Euclidean vector bundle isomorphism $\phi:\left.E_1\right|_{\Sigma}\to\left.E_2\right|_{\Sigma}$ that covers the identity map and intertwines with the corresponding Dirichlet-to-Neumann operators $\Lambda_{1,\Sigma}$ and $\Lambda_{2,\Sigma}$ relative to $c$. Then the bundles $E_1$ and $E_2$ are isomorphic, and moreover, there exists a real-analytic Euclidean vector bundle isomorphism $\Phi:E_1\to E_2$ that covers the identity map of $M$, such that $\Phi^*\nabla^2=\nabla^1$ and $\left.\Phi\right|_{\Sigma}=\phi$.
\end{theorem}

Related results on the Calder\'on problem for the connection Laplacian on vector bundles over surfaces can be found in~\cite{AGTU,GT11}, where the authors are concerned with recovering a connection on a fixed vector bundle. Although our hypothesis that the related data are real-analytic might be more restrictive than the ones used in the literature, we do not know any similar uniqueness results for the topology of a vector bundle. It is likely that Theorem~\ref{t2} can be improved, and the topology of the base surface together with the conformal class of metrics on it can also be identified. We plan to address this problem in a future work.

The proofs of Theorems~\ref{t1} and~\ref{t2} build on an elegant idea in~\cite{LTU03}. Using Green kernels for the connection Laplacian, we construct immersions of our vector bundles into some function space, and recover the geometry and topology from their images. We believe that some technical details of our argument in the setting of vector bundles might be of independent interest, and the improvements give a more streamlined proof of the original results in~\cite{LTU03}. A few generalisations of our results are possible. First, Theorem~\ref{t1} can be extended to vector bundles over non-compact complete manifolds with compact boundaries. Second, the conclusions continue to hold for the Dirichlet-to-Neumann operators associated with Schrodinger operators, that is connection Laplacians with symmetric real-analytic potentials, see~\cite{RG22}. It is likely that the hypothesis that potentials are real-analytic can be significantly weakened, but we have made no attempt in pursuing this direction. In~\cite{RG22} the first-named author also obtains analogous results for the Jacobi operator, and applies them to the Calder\'on problem for the harmonic map equation.

\subsection{Organisation of the paper}
The paper is organised in the following way. In Section~\ref{prems} we introduce notation and recall background material on the connection Laplacian and its Green kernel. Here we also recall the results that relate the symbol of the Dirichlet-to-Neumann operator to the geometry on and near the boundary. Section~\ref{immersions} is the main body of the paper. Here we define the immersions of vector bundles via Green kernels to some function space, describe its properties, and prove main results. In Section~\ref{2dim} we outline the version of the main argument for vector bundles over surfaces, which gives the proof of Theorem~\ref{t2}. In the last section we collect proofs of auxiliary statements.

\section{Preliminaries}
\label{prems}
\subsection{Notation}
Let $(M,g)$ be a connected compact Riemannian manifold with boundary, and let $E$ be a vector bundle over $M$. We suppose that $E$ is equipped with a connection $\nabla^E$ and a Euclidean structure, that is an inner product $\langle\cdot,\cdot\rangle_E$ compatible with $\nabla^E$. The latter induces a natural $L_2$-product on the space of smooth sections $\Gamma(E)$ by the formula
$$
(u,v)_2=\int\limits_M\langle u_x,v_x\rangle_E\mathit{dVol}_g(x),
$$
where $\vol_g$ is the volume measure on $M$. Below we use the notation $\Gamma(V)$ for the space of smooth sections of a vector bundle $V$ over $M$. We also have a natural $L_2$-product on $\Gamma(T^*M\otimes E)$, defined by the formula
$$
(\alpha,\beta)_2=\int\limits_M\mathit{trace}_g\langle \alpha_x(\cdot),\beta_x(\cdot)\rangle_E\mathit{dVol}_g(x),
$$ 
where $\mathit{trace}_g$ is the trace (contraction) with respect to a metric tensor $g$ on $M$.

Recall that the {\em connection Laplacian} $\Delta_g^E$ is a second order differential operator on sections of $E$ defined by the formula
\begin{equation}
\label{cl:def}
\Delta^E_g=-\mathit{trace}_g\nabla^2,
\end{equation}
where $\nabla^2:\Gamma(E)\to\Gamma(\otimes^2 T^*M\otimes E)$ is the natural second derivative determined by the Levi-Civita connection on $M$ and the connection $\nabla^E$ on $E$. Equivalently, it can be defined as the composition $(\nabla^E)^*\nabla^E$, where $(\nabla^E)^*$ is the formally adjoint operator with respect to the  $L_2$-products on $\Gamma(E)$ and $\Gamma(T^*M\otimes E)$ defined above, see~\cite[Section~1]{EL83}. Note that although the second definition uses the inner product on $E$, by relation~\eqref{cl:def} the operator $\Delta^E_g$ does not depend on such an inner product. 

Now we briefly discuss the behaviour of the connection Laplacian when a Riemannian metric or a connection change. First, if $\tilde g=\exp(2\varphi) g$ is another Riemannian metric, then a direct computation shows that 
$$
\Delta^E_{\tilde g}s=\exp(-2\varphi)(\Delta^E_{g}s-(n-2)\nabla^E_{Z}s),
$$
where $Z=\grad\varphi$ is the gradient vector field with respect to a metric $g$, and $n$ is the dimension of $M$. In particular, when $n=2$, the operator $\Delta^E_g$ is conformally covariant, and a section $s\in\Gamma(E)$ is harmonic or not with respect to $g$ and $\tilde g$ simultaneously. Second, consider another connection $\tilde\nabla^E$ on $E$, and denote by $\tilde\Delta_g^E$ the corresponding connection Laplacian. Recall that a vector bundle isomorphism $\Phi:E\to E$ is called a {\em gauge equivalence} if $\Phi^*\tilde\nabla^E=\nabla^E$, that is
$$
\nabla^E_Xs=\Phi^{-1}\circ\tilde\nabla_X^E(\Phi\circ s)
$$
for any section $s\in\Gamma(E)$. In a local frame for $E$, this relation is equivalent to
$$
\omega=\gamma^{-1}\tilde\omega\gamma+\gamma^{-1}d\gamma,
$$
where $\omega$ and $\tilde\omega$ are the connection matrices of $\nabla^E$ and $\tilde\nabla^E$ respectively, and $\gamma$ is the matrix of $\Phi$. A straightforward calculation shows that the connection Laplacians of gauge equivalent connections are related by the formula
\begin{equation}
\label{gauge:l}
\Delta_g^E=\Phi^{-1}\circ\tilde\Delta^E_g(\Phi\circ s)
\end{equation}
for any section $s\in\Gamma(E)$. These properties determine the behaviour of other quantities closely related to the  Laplacian, such as its Green kernel and Dirichlet-to-Neumann operator.  We recall the necessary background material on them below.

\subsection{The Green kernel for the connection Laplacian}
Choosing local coordinates on $M$ and a local frame for $E$, it is straightforward to see that the equation 
\begin{equation}
\label{inhom}
\Delta^Eu=w,\quad\text{ where~ }\;w\in\Gamma(E),
\end{equation}
takes the form of an elliptic system of the second order differential equations, whose principal symbol $\sigma(x,\xi)\in\mathcal L(T_xM,T_xM)$  equals $\abs{\xi}^2_g\Id$, where $\xi\in T_x^*M$, $x\in M$. In particular, this system is {\em strongly elliptic in the sense of Petrowsky}, which is the strongest notion of ellipticity for systems, and by the results in~\cite{John59,Lop1,Lop2} there exists a Green matrix locally around every point. For a brief overview of various notions of ellipticity for systems of differential equations and related results we refer to~\cite{Mi70}. In our notation the existence of the Green matrix means that for any point $p\in M$ there exists a neighbourhood $U_p$ and a smooth matrix-valued function $G(x,y)$, where $x,y\in U_p$, $x\ne y$, such that
\begin{equation}
\label{def:rel}
\int\limits_M\langle G(x,y),\Delta^Ew(y)\rangle_{y,E}d\vol_g(y)=w(x)
\end{equation}
for any section $w\in\Gamma(E)$ whose support lies in $U_p$. Above we view $\langle G(x,y),\cdot\rangle_{y,E}$ as a linear operator from the fibre $E_y$ to the fibre $E_x$. For the sequel we need the existence of a similar object globally on $M$.

Below by $E\boxtimes E$ we denote the so-called external tensor product, that is the vector bundle over $M\times M$ whose fibre over a point $(x,y)$ equals $E_x\otimes E_y$. With some abuse of notation, for any vectors $v_x\in E_x$, $u^1_y,u^2_y\in E_y$ we write $\langle v_x\otimes u^1_y, u^2_y\rangle_{y,E}$ for the vector $\langle u^1_y, u^2_y\rangle_Ev_x\in E_x$. Thus, if $G(x,y)$ is viewed as an element in $E_x\otimes E_y$, the notation $\langle G(x,y),\Delta^Ew(y)\rangle_{y,E}$ in relation~\eqref{def:rel} can be understood as an element in $E_x$. We also use the notation $\mathcal D(E)$ for a subspace of $\Gamma(E)$ formed by smooth sections whose supports lie in the interior of $M$.
\begin{defin}
A smooth section $G$ of the vector bundle $E\boxtimes E$ defined away from the diagonal $\diag(M)=\{(x,x)\in M\times M\}$ is called the {\em Dirichlet Green kernel} if: 
\begin{itemize}
\item[(i)] the integral of the function $y\mapsto\abs{G(x,y)}_{E\otimes E}$ is finite for all $x\in M$;
\item[(ii)] relation~\eqref{def:rel} holds for all $w\in\mathcal D(E)$; 
\item[(iii)] $G(x,y)=0$ for all $x\in M$, $y\in\partial M$, $x\ne y$.
\end{itemize}
\end{defin}
It is straightforward to see that the Dirichlet Green kernel occurs as the kernel of a linear operator that sends a section $w\in\Gamma(E)$ to the solution of equation~\eqref{inhom} with the Dirichlet boundary condition $\left.u\right|_{\partial M}=0$. Note that the connection Laplacian $\Delta^E$ on sections of $E$ naturally induces the operator $\Delta^E_y$ on the sections of $E\boxtimes E$ that sends $v_x\otimes u_y$ to $v_x\otimes\Delta^Eu_y$ for any smooth sections $v,u\in\Gamma(E)$. In this notation the condition~(ii) in the definition means that 
$$
\Delta_y^EG(x,\cdot)=\delta_x
$$
for any $x\in M$. For the sequel we note that the combination of the Sobolev embedding together with standard regularity theory shows that the section $y\mapsto G(x,y)$ lies in the Sobolev space $W^{2-k,2}(E)$ for any $k>n/2$, $x\in M$, where $n=\dim M$.

The following statement can be found in~\cite{RG22}; it is a consequence of local existence results in~\cite{John59,Lop1,Lop2}.
\begin{prop}
Let $(M,g,E)$ be a Euclidean vector bundle over a compact Riemannian manifold with boundary, equipped with a compatible connection $\nabla^E$. Then there exists a unique Dirichlet Green kernel on $E$. Besides, it is symmetric in the sense that
$$
G(y,x)=\tau_{x,y}G(x,y)\qquad\text{for all~ }x,y\in M,
$$
where $\tau_{x,y}:E_x\otimes E_y\to E_y\otimes E_x$ is a natural isomorphism that sends $v_x\otimes u_y$ to $u_y\otimes v_x$.
\end{prop}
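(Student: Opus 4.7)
The plan is to construct the Dirichlet Green kernel as the Schwartz kernel of the inverse of the Dirichlet realisation of $\Delta^E$, and then verify the defining properties together with symmetry.

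\textbf{Existence.} I would first check that the Dirichlet Laplacian $\Delta^E_D$ is an isomorphism on the appropriate Sobolev scale. From $\Delta^E=(\nabla^E)^*\nabla^E$, integration by parts against a Dirichlet-harmonic section $u$ gives $\int_M\abs{\nabla^E u}^2 d\vol_g = 0$, so $u$ is parallel; since $M$ is connected and $u|_{\partial M}=0$, this forces $u=0$. Combined with Fredholm theory for the strongly elliptic formally self-adjoint operator $\Delta^E$, this yields a bounded inverse $T$, whose Schwartz kernel, viewed as a section of $E\boxtimes E$, is the candidate $G$.

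\textbf{Regularity, singularity, and boundary vanishing.} Off the diagonal, $G$ is smooth by elliptic regularity. To obtain property~(i) I would invoke the local Green matrices of John and Lopatinsky cited in the statement: at any interior or boundary point they furnish a local parametrix with the Newton-type singularity $\abs{x-y}^{2-n}$ (logarithmic when $n=2$), which is locally integrable. Subtracting such a parametrix from $G$ produces a smoother global remainder, yielding integrability of $y\mapsto\abs{G(x,y)}_{E\otimes E}$ for every $x$. Property~(iii) follows from the fact that $T$ maps into $H^1_0(E)$: the representation $(Tw)(x)=\int_M\langle G(x,y),w(y)\rangle_{y,E}d\vol_g(y)$ together with regularity of $G(x,\cdot)$ up to the boundary (away from $x$) forces $G(x,y)=0$ for $y\in\partial M$, $x\ne y$.

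\textbf{Uniqueness.} If $G'$ is another Dirichlet Green kernel, the difference $H=G-G'$ has matching diagonal singularities, hence extends smoothly across the diagonal. For each fixed $x$, the section $H(x,\cdot)$ satisfies $\Delta^E H(x,\cdot)=0$ in the distributional sense by~(ii), and vanishes on $\partial M$ by~(iii); thus $H(x,\cdot)\equiv 0$ by the vanishing argument of the existence step.

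\textbf{Symmetry.} This is essentially formal self-adjointness of $T$. Expanding $(Tw,u)_2=(w,Tu)_2$ for $w,u\in\mathcal D(E)$ via the kernel representation and unwinding in a local orthonormal frame of $E$ gives, almost everywhere and hence pointwise off the diagonal, the required identity $G(y,x)=\tau_{x,y}G(x,y)$.

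\textbf{Main obstacle.} The delicate point is the behaviour of $G$ near the diagonal up to the boundary, which is needed both for integrability~(i) and for the smoothness of $H$ across the diagonal in the uniqueness step. Interior control is immediate from the John-Lopatinsky parametrix; boundary control requires the boundary-adapted form of these local constructions, and careful bookkeeping so that the leading diagonal singularity agrees between any two candidate kernels.
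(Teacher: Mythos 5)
The paper does not supply its own proof of this proposition; it refers to the thesis \cite{RG22} and to the local existence results of John and Lopatinsky \cite{John59,Lop1,Lop2}, so there is nothing to compare line by line. Your functional-analytic route --- realising $G$ as the Schwartz kernel of $T=(\Delta^E_D)^{-1}$ --- is a correct and standard alternative, and is consistent with the paper's own remark that the Dirichlet Green kernel is the kernel of the solution operator for the inhomogeneous Dirichlet problem. The energy argument for injectivity of $\Delta^E_D$ (using compatibility of the Euclidean structure so that a parallel section with vanishing boundary trace vanishes), the Fredholm step, the use of local Green matrices for the integrability condition~(i), and symmetry from self-adjointness of $T$ are all sound.

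Two small logical slips should be repaired. First, the deduction of property~(iii) from $T\colon L^2\to H^1_0$ is off by a transposition: vanishing of $(Tw)|_{\partial M}$ for all $w$ forces $G(x,y)=0$ for $x\in\partial M$, i.e.\ vanishing in the \emph{first} variable, whereas~(iii) asks for vanishing when $y\in\partial M$. Since you do establish the symmetry $G(y,x)=\tau_{x,y}G(x,y)$ from self-adjointness of $T$, simply prove symmetry first and then read off~(iii). Second, in the uniqueness step the phrase ``matching diagonal singularities'' asserts more than the definition provides: the definition does not prescribe the singularity type of a Green kernel. What you actually have is that by~(i) the difference $H(x,\cdot)=G(x,\cdot)-G'(x,\cdot)$ is locally integrable, hence a distribution, and by~(ii) it satisfies $\Delta^E_y H(x,\cdot)=\delta_x-\delta_x=0$ distributionally on the interior; smoothness of $H(x,\cdot)$ across $x$ is then elliptic regularity (Weyl's lemma), not cancellation of prescribed singular parts. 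With these adjustments the proposal is complete and self-contained.
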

Now suppose that $(M,g,E)$ is a real-analytic Euclidean vector bundle over a compact real-analytic Riemannian manifold $M$ with boundary, equipped with a real-analytic compatible connection $\nabla^E$. 
Then locally the equation
$$
\Delta_y^EG(x,y)=0,\quad\text{ where }\;\; y\in M\backslash\{x\},
$$
takes the form of a system of differential equations with analytic coefficients, and by~\cite{Pet39,John59} we conclude that the Green kernel is real-analytic in the second variable $y$. More generally, we shall often use the fact that a section of $E$ that is harmonic on an open set is automatically real-analytic on this set.

Let $\tilde\nabla^E$ be another connection on a fixed vector bundle $E$ that is gauge equivalent to $\nabla^E$, that is there exists a vector bundle isomorphism $\Phi:E\to E$ such that $\Phi^*\tilde\nabla^E=\nabla^E$. Denote by $\tilde G$ the Green kernel corresponding to the connection Laplacian $\tilde\Delta^E$. Using relation~\eqref{gauge:l}, it is straightforward to see that $\tilde G=\Phi^\boxtimes\circ G$, where $\Phi^\boxtimes:E\boxtimes E\to E\boxtimes E$ is an isomorphism that equals $\Phi_x\otimes\Phi_y$ on each fibre $E_x\otimes E_y$, where $x$, $y\in M$. In addition, as expected, when dimension of $M$ equals two, the Green kernel is invariant under conformal changes of a metric on $M$.

\subsection{The Dirichlet-to-Neumann operator and its symbol}
As follows from standard theory~\cite{GT}, see also a discussion in~\cite{RG22}, the Dirichlet problem
\begin{equation}
\label{DP}
\Delta^Eu=0,\qquad \left.u\right|_{\partial M}=s,
\end{equation}
has a unique solution for any smooth section $s$ of $\left. E\right|_{\partial M}$. For a given open set $\Sigma\subset\partial M$ the {\em Dirichlet-to-Neumann operator} $\Lambda_\Sigma:\mathcal D(\left. E\right|_{\Sigma})\to \mathcal D(\left. E\right|_{\Sigma})$ is defined on smooth compactly supported in $\Sigma$ sections by the formula
$$
\Lambda_\Sigma(s)=\nabla^E_{\partial/\partial\mathbf{n}}u,
$$ 
where $u$ is the solution to Dirichlet problem~\eqref{DP}, and $\mathbf{n}$ is the outward unit normal vector to $\partial M$. Note that if $\Sigma=\partial M$, and $\Lambda$ is the corresponding Dirichlet-to-Neumann operator, then for any $\Sigma\subset\partial M$ the operator $\Lambda_\Sigma$ is precisely the restriction of $\Lambda$ to the space of sections whose supports lie in $\Sigma$.

Following~\cite{Ta96}, it is straightforward to show, see~\cite{RG22}, that the Dirichlet-to-Neumann operator $\Lambda$ defined above is an elliptic pseudodifferential operator of first order. In more detail, in local coordinated on $\partial M$ that trivialise $\left.E\right|_{\partial M}$ the operator $\Lambda$ takes the form of a pseudodifferential operator whose symbol $p(x,\xi)$, where $x\in M$, $\xi\in T_x^*M$, has an asymptotic expansion
$$
p(x,\xi)\sim \sum_{j\leqslant 1}p_{j}(x,\xi),
$$
where $p_j(x,\xi)$ is a homogeneous matrix-valued function of degree $j$ in $\xi$, and $j\in\mathbb Z$. The asymptotic expansion above is understood in the sense that for any integer $N\geqslant 0$ the matrix-valued function
$p(x,\xi)-\sum_{j=-N}^1p_{j}(x,\xi)$ lies in the class $S^{1-N}_{1,0}$, see~\cite{Shu01,Tre} for details. 

Recall that any local coordinates $(x^1,\ldots,x^{n-1})$ on $\Sigma$ can be extended to the so-called {\em boundary normal coordinates} $(x^1,\dots,x^{n-1},x^n)$ on $M$. The latter are defined by the conditions that the equation $x^n=0$ describes the boundary $\partial M$ and each curve $t\mapsto (x^1,\dots,x^{n-1},t)$ is a unit speed geodesic orthogonal to the boundary. Similarly, any local frame $\{s_l\}$ for $\left. E\right|_{\partial M}$ can be extended to the so-called {\em boundary normal frame} $\{\bar s_l\}$ for $E$ such that
$$
\nabla^E_{\partial/\partial x^n}\bar s_l=0\quad\text{and}\quad \left.\bar s_l\right|_{\Sigma}=s_l,
$$
where $l=1,\ldots,\rank E$.

For the sequel we need the following result that gives a more precise information about the coefficients $p_j(x,\xi)$ and their derivatives. Its proof follows the general strategy used in~\cite{LU89}, and can be found in~\cite{RG21}. In a slightly different notation it also appears in~\cite{Cek20}.
\begin{prop}
\label{loc}
Let $(M,g,E)$ be a vector bundle over a compact Riemannian manifold with boundary, equipped with a connection $\nabla^E$. Suppose that $n=\dim M\geqslant 3$, and let $\Sigma\subset\partial M$ be an open set that trivialises $\left.E\right|_{\partial M}$. For local coordinates $(x^1,\ldots,x^{n-1})$ on $\Sigma$ and a local frame $\{s_\ell\}$ of $E$ over $\Sigma$ let $p_j(x,\xi)$, where $j\leqslant 1$, be a full symbol of the Dirichlet-to-Neumann operator $\Lambda$. Then  the Taylor series of the metric tensor $g$ and the connection matrix $\omega$ of $\nabla^E$ in the boundary normal coordinates $(x^1,\ldots,x^{n-1},x^n)$ and the boundary normal frame $\{\bar s_\ell\}$ at a point $x$ on the boundary are determined by explicit formulae in terms of the matrix-valued functions $p_j(x,\xi)$, where $j\leqslant 1$, and their derivatives at $x$.
\end{prop}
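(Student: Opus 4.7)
The plan is to adapt the Lee--Uhlmann factorisation strategy of \cite{LU89} to the connection Laplacian. Working in the boundary normal coordinates $(x^1,\ldots,x^{n-1},x^n)$ and the boundary normal frame $\{\bar s_\ell\}$ near a point $x\in\Sigma$, the metric satisfies $g_{nn}=1$, $g_{n\alpha}=0$, and the connection matrix satisfies $\omega_n=0$. A direct computation then brings the connection Laplacian into the form
$$
\Delta^E=-\partial_n^2-E(x',x^n)\,\partial_n+Q(x',x^n,\partial_{x'}),
$$
where $Q$ is a second-order matrix-valued tangential differential operator, and both $E$ and the coefficients of $Q$ are explicit polynomial expressions in $g^{\alpha\beta}$, $\omega_\alpha$, and their $x'$-derivatives, together with a single $\partial_n$-derivative of $g_{\alpha\beta}$ appearing in $E$.

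Next, I would seek a factorisation modulo smoothing operators
$$
\Delta^E\equiv -\bigl(\partial_n+B(x',x^n,D_{x'})\bigr)\circ\bigl(\partial_n-A(x',x^n,D_{x'})\bigr)\pmod{\Psi^{-\infty}},
$$
where $A$ and $B$ are smooth one-parameter families of first-order matrix-valued pseudodifferential operators on $\Sigma$. If $u$ is the harmonic extension of a section $s$ supported in $\Sigma$, then $(\partial_n-A)u\equiv 0$ modulo a smoothing term, so that $\Lambda s\equiv A(x',0,D_{x'})s$; hence the full symbol $p(x',\xi')$ coincides with $a(x',0,\xi')\sim\sum_{j\leqslant 1}a_j(x',\xi')$ up to the symbol of a smoothing operator. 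Expanding the factorisation identity at the level of symbols produces a recursion for the homogeneous pieces $a_j$. The leading equation is
$$
a_1(x',\xi')^2=g^{\alpha\beta}(x',0)\,\xi_\alpha\xi_\beta\cdot\Id,\qquad\text{so that}\qquad a_1=\sqrt{g^{\alpha\beta}\xi_\alpha\xi_\beta}\,\Id,
$$
and therefore $p_1$ already determines $g^{\alpha\beta}|_{\Sigma}$, and hence the boundary metric.

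For $k\geqslant 1$, matching the terms of degree $1-k$ in the factorisation equation yields a linear relation of the schematic form
$$
2a_1\,a_{1-k}=F_k\bigl(\partial_n^k g_{\alpha\beta}|_{\Sigma},\,\partial_n^{k-1}\omega_\alpha|_{\Sigma};\,\text{lower-order normal jets of }g,\omega;\,a_1,\ldots,a_{2-k}\bigr),
$$
where the right-hand side is polynomial in $\xi'$, affine in the top-order normal jets displayed, and explicit in the previously computed data. I would then invert this recursion by induction on $k$: assuming the Taylor jets $\partial_n^j g_{\alpha\beta}|_{\Sigma}$ for $j<k$ and $\partial_n^j\omega_\alpha|_{\Sigma}$ for $j<k-1$ (with all tangential derivatives) have been recovered from $p_1,\ldots,p_{2-k}$ and their $x$-derivatives at the chosen boundary point, the identity $p_{1-k}(x,\cdot)=a_{1-k}(x,\cdot)$ becomes an algebraic equation whose unknowns are precisely $\partial_n^k g_{\alpha\beta}|_{x}$ and $\partial_n^{k-1}\omega_\alpha|_{x}$. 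Comparing coefficients in $\xi'$ (exploiting that $F_k$ splits into pieces of definite parity in $\xi'$ and that $a_1$ is elliptic) yields explicit formulae for these unknowns in terms of $p_1,\ldots,p_{1-k}$ and their $x$-derivatives. Since $g_{nn}\equiv 1$, $g_{n\alpha}\equiv 0$ and $\omega_n\equiv 0$ identically in the chosen coordinates and frame, this inductively recovers the full Taylor expansions of $g$ and of the connection matrix $\omega$ at $x$.

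\emph{The main obstacle} is the bookkeeping needed to isolate the top-order normal jets in the recursion and to check their unique solvability at each step: one must verify that the linear map from the unknown jets $(\partial_n^k g_{\alpha\beta},\partial_n^{k-1}\omega_\alpha)$ to the matrix coefficients of $a_{1-k}$ --- obtained by expanding in $\xi'$ and separating the parts of definite parity --- is invertible, so that the jets can be read off from $p_{1-k}$. This is a somewhat intricate but otherwise elementary algebraic calculation, modelled on the scalar case of \cite{LU89} and carried out for Hermitian bundles in \cite{Cek20}; the hypothesis $n=\dim M\geqslant 3$ enters precisely here, as it does in the scalar setting, through the structure of the tangential symbols.
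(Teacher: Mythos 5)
Your proposal follows the Lee--Uhlmann factorisation strategy, which is exactly the route the paper indicates: the paper does not prove Proposition~\ref{loc} directly but cites~\cite{RG21}, remarking that the proof there follows the general strategy of~\cite{LU89} with a parallel statement in~\cite{Cek20}. Your sketch (boundary normal coordinates and frame, the first-order factorisation $\Delta^E\equiv -(\partial_n+B)(\partial_n-A)$ modulo smoothing, $\Lambda\equiv A|_{x^n=0}$, and the inductive recovery of the normal jets of $g$ and $\omega$ from the recursion $2a_1 a_{1-k}=F_k(\cdots)$, with $\dim M\geqslant 3$ used to invert that recursion) is the same argument, with the remaining algebraic bookkeeping appropriately deferred to the cited sources.
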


Proposition~\ref{loc} is an important initial ingredient in the proof of our main result, Theorem~\ref{t1}. In particular, it says that the symbol of the Dirichlet-to-Neumann operator determines the metric and connection on the boundary. Moreover, if these data are real-analytic, then the symbol determines them in a neighbourhood of the boundary. It is worth noting that although, in general, the connection Laplacian has a zero order term in a fixed frame, it does not have a natural conformal gauge invariance property, similar to the one for the conformal Laplacian, see~\cite{DKSU09,LLS22}. This can be seen from the explicit formula for the zero term in boundary normal coordinates and boundary normal frame in~\cite{RG21}. In particular, the phenomenon described in~\cite[Theorem~8.4]{DKSU09} does not occur for the connection Laplacian, and the statement of Proposition~\ref{loc} is indeed natural.

In dimension two, the formulae in~\cite{Cek20,RG21} do not allow us to determine the normal derivatives of the metric tensor. However, the Taylor series of the connection matrix can still be recovered from the Dirichlet-to-Neumann operator, assuming that the metric tensor together with all its derivatives is known. More precisely, the following version of Proposition~\ref{loc} holds, see~\cite{RG21}.
\begin{prop}
\label{loc2}
Let $(M,g,E)$ be a vector bundle over a compact Riemannian surface with boundary, equipped with a connection $\nabla^E$, and let $\Sigma\subset\partial M$ be an open set that trivialises $\left.E\right|_{\partial M}$. For a local coordinate $x^1$ on $\Sigma$ and a local frame $\{s_\ell\}$ of $E$ over $\Sigma$ let $p_j(x,\xi)$, where $j\leqslant 1$, be a full symbol of the Dirichlet-to-Neumann operator $\Lambda$. Then  the Taylor series of  the connection matrix $\omega$ of $\nabla^E$ in the boundary normal coordinates $(x^1,x^2)$ and the boundary normal frame $\{\bar s_\ell\}$ at a point $x$ on the boundary are determined by explicit formulae in terms of the matrix-valued functions $p_j(x,\xi)$, where $j\leqslant 1$, the metric tensor $g$, and their derivatives at $x$.
\end{prop}

Now let $(M_i,E_i,\nabla^i)$, where $i=1,2$, be two vector bundles over compact Riemannian manifolds with boundary $(M_i,g_i)$, equipped with connections $\nabla^i$. Suppose that these data are {\em gauge equivalent} in the following sense: there exists a vector bundle isomorphism $\Phi:E_1\to E_2$ that covers an isometry $\Psi:(M_1,g_1)\to (M_2,g_2)$ such that $\Phi^*\nabla^{2}=\nabla^1$. Then, using relation~\eqref{gauge:l} it is straightforward to conclude that the corresponding Dirichlet-to-Neumann operators $\Lambda_1$ and $\Lambda_2$ intertwine, that is
$$
\phi\circ\Lambda_{1}(s)\circ\psi^{-1}=\Lambda_{2}(\phi\circ s\circ\psi^{-1})
$$
for any smooth section $s$ of $\left.E_1\right|_{\partial M_1}$, where $\psi=\left.\Psi\right|_{\partial M_1}$ and $\phi=\left.\Phi\right|_{\partial M_1}$.

Analysing the formulae for the symbols of $\Lambda_1$ and $\Lambda_2$, see~\cite{RG22}, one can show that the converse holds on the boundary.

\begin{prop}
\label{loc:gauge}
Let $(M_i,g_i,E_i,\nabla^i)$, where $i=1,2$, be two Euclidean smooth vector bundles defined over connected compact Riemannian manifolds with boundary. Suppose that for some open subsets $\Sigma_i\subset\partial M_i$ there exists a vector bundle isomorphism $\phi:\left.E_1\right|_{\Sigma_1}\to\left.E_2\right|_{\Sigma_2}$ that intertwines with the corresponding Dirichlet-to-Neumann operators $\Lambda_{\Sigma_1}$ and $\Lambda_{\Sigma_2}$. Then the isomorphism $\phi$ is a gauge equivalence, $\phi^*\nabla^2=\nabla^1$, and covers an isometry $\psi:(\Sigma_1,g_1)\to (\Sigma_2,g_2)$. 
 \end{prop}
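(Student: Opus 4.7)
The plan is to apply Proposition~\ref{loc}, which asserts that the full symbol of the Dirichlet-to-Neumann operator, expressed in boundary normal coordinates and a boundary normal frame, determines the boundary jet of the metric tensor and of the connection matrix. The intertwining relation should translate into equality of these symbols in coordinate/frame systems matched by $\psi$ and $\phi$, and the desired conclusion will then follow by reading off the zeroth-order terms of the resulting Taylor expansions.

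First, I would reformulate the intertwining relation as $\Lambda_{\Sigma_1}=T\circ\Lambda_{\Sigma_2}\circ T^{-1}$, where $T:\Gamma(\left.E_2\right|_{\Sigma_2})\to\Gamma(\left.E_1\right|_{\Sigma_1})$ is the pullback $s\mapsto\phi^{-1}\circ s\circ\psi$ induced by $\phi$ and the diffeomorphism $\psi:\Sigma_1\to\Sigma_2$ that $\phi$ covers. Next, pick local coordinates $(x^1,\ldots,x^{n-1})$ on an open subset $U\subset\Sigma_2$ together with a local frame $\{s_\ell\}$ for $\left.E_2\right|_{U}$, and transfer these data to $\psi^{-1}(U)\subset\Sigma_1$ by setting $y^j:=x^j\circ\psi$ and $\tilde s_\ell:=\phi^{-1}\circ s_\ell\circ\psi$. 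Both systems are then extended to boundary normal coordinates and boundary normal frames in $M_1$ and $M_2$ using the respective metrics and connections.

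In the matched systems the operators $\Lambda_{\Sigma_1}$ and $\Lambda_{\Sigma_2}$ are represented by identical matrices of pseudodifferential operators on the same open subset of $\mathbb R^{n-1}$, so their full symbols $p_j^{(1)}$ and $p_j^{(2)}$ coincide as matrix-valued functions once the cotangent variables are identified via $\psi$. Proposition~\ref{loc} then implies that at every boundary point the Taylor series of $g_2$ in boundary normal coordinates and of the connection matrix of $\nabla^2$ in the boundary normal frame agree with those of $g_1$ and of the connection matrix of $\nabla^1$ at the corresponding point. Reading off the zeroth-order terms yields $\psi^*(\left.g_2\right|_{\Sigma_2})=\left.g_1\right|_{\Sigma_1}$ and equality of the tangential connection matrices, which are precisely the statements that $\psi$ is an isometry and $\phi^*\nabla^2=\nabla^1$ on $\left.E_1\right|_{\Sigma_1}$.

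The main obstacle will be the clean verification that conjugation by $T$ passes from the operator level to equality of full symbols in the matched coordinate/frame systems. This combines the diffeomorphism invariance of pseudodifferential symbols with the trivial transformation law under a change of local frame, but the simultaneous presence of a base diffeomorphism and a fibrewise isomorphism calls for careful bookkeeping in the asymptotic expansion. Once this step is carried out, the rest of the argument is an immediate application of Proposition~\ref{loc}.
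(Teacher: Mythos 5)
Your strategy is the same one the paper gestures at: translate the intertwining relation into equality of full symbols in coordinate and frame systems matched by $\psi$ and $\phi$, and read off the boundary metric and connection from the symbol formulae. The paper itself only sketches this, deferring to~\cite{RG21} for the formulae, so your write-up is in fact more explicit than what appears in the text, and your identification of the delicate step (verifying that conjugation by $T$ descends to equality of symbols once both the base diffeomorphism and the fibrewise isomorphism are accounted for) is well placed.

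There is, however, a genuine mismatch in the dimension hypotheses that you should address. Proposition~\ref{loc} carries the assumption $n=\dim M\geqslant 3$, and you invoke it wholesale. But the paper explicitly remarks that Proposition~\ref{loc:gauge} holds with \emph{no} restriction on the dimension, and the reason is that you only need the zeroth-order Taylor coefficients (the boundary values $g|_{\Sigma}$ and $\omega|_{\Sigma}$), not the full boundary jet. The full-jet recovery in Proposition~\ref{loc} genuinely fails when $n=2$, because in that dimension the connection Laplacian is conformally covariant and the normal derivatives of the metric cannot be separated from a conformal factor; but the principal and subprincipal symbols still determine the tangential metric and the tangential connection matrix on the boundary in every dimension. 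So rather than appealing to Proposition~\ref{loc}, you should argue directly from the formulae for $p_1(x,\xi)$ and $p_0(x,\xi)$ in~\cite{RG21}: the principal symbol $p_1=|\xi|_g\,\mathrm{Id}$ gives $\psi^*(g_2|_{\Sigma_2})=g_1|_{\Sigma_1}$, and the next term gives equality of the tangential connection matrices in the matched frames, i.e.\ $\phi^*\nabla^2=\nabla^1$ over $\Sigma_1$. With that adjustment the argument is complete and dimension-free, matching the statement as given.
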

The statement above can be viewed as the boundary version of Theorem~\ref{t1}. Note that there is no restriction on dimension of $M$ in Proposition~\ref{loc:gauge}. Similar results continue to hold for the Dirichlet-to-Neumann operator associated with the connection Laplacian with a symmetric real-analytic potential. We refer to~\cite{RG22} for precise statements and a related discussion.

\section{Immersions by Green kernels}
\label{immersions}
\subsection{The construction of immersions}
Let $E$ be a Euclidean real-analytic vector bundle over a connected compact real-analytic manifold $M$ with boundary, equipped with a real-analytic connection $\nabla^E$. In this section we assume that $n=\dim M\geqslant 3$, and describe how one can reconstruct $E$ from the Dirichlet-to-Neumann operator  $\Lambda_\Sigma$, where $\Sigma\subset\partial M$ is an open subset. Our argument develops the ideas from~\cite{LTU03} to the setting of vector bundles, and we aim to make the related technical details to be rather explicit.

Fix a point $p\in\Sigma$. First, note that we may consider $M$ as a subset of a larger real-analytic manifold $\tilde M$. More precisely, choosing boundary normal coordinates $(x^1,\ldots,x^n)$ around $p$, we may identify a neighbourhood of $p$ in $M$ with the Euclidean half-ball
$$
B^+(0,\rho)=\{(x^1,\ldots,x^n)\in B^n(0,\rho): x^n\geqslant 0\},
$$
where $B^n(0,\rho)$ is an open Euclidean ball of radius $\rho>0$ in $\mathbb R^n$. Then, as the manifold $\tilde M$ one can take the manifold obtained by gluing $B^n(0,\rho)$ to $M$ such that points in $B^+(0,\rho)$ are identified with points in $M$ by means of boundary normal coordinates. Below by $U$ we denote the open set $\tilde M\backslash\bar M$. For the sequel it is important to note that the set $U$ does not really depend on $M$. In other words, if there are two manifolds $M_i$ of the same dimension and two points $p_i\in\Sigma_i\subset\partial M_i$, where $i=1,2$, then choosing a sufficiently small $\rho>0$ we may assume that the sets $\tilde M_1\backslash\bar M_1$ and $\tilde M_2\backslash\bar M_2$ coincide.

It is straightforward to see that a real-analytic metric $g$ on $M$ extends to a real-analytic metric $\tilde g$ on $\tilde M$, if $\rho$ is sufficiently small. Similarly, the above construction shows that a real-analytic vector bundle $E$ over $M$ extends to a real-analytic vector bundle $\tilde E$ over $\tilde M$ such that $\left.\tilde E\right|_{U}$ is trivial. Making $\rho>0$ smaller if necessary, we may also assume that a real-analytic Euclidean structure and a real-analytic connection $\nabla$ on $E$ extend to an inner product and a connection $\tilde\nabla$ on $\tilde E$. If the former were compatible on $E$, then by unique continuation so are the latter on $\tilde E$. Below by $\tilde G$ we denote the Dirichlet Green kernel on $\tilde E$.

Denote by $\mathcal E$ the trivial vector bundle $\left.\tilde E\right|_U$. For a given integer $\ell<2-n/2$, where $n$ is the dimension of $M$,  we define the map $\mathcal G:\tilde E\to W^{\ell,2}(\mathcal E)$ by setting
\begin{equation}
\label{g:def}
\tilde E_x\ni v_x\longmapsto \langle v_x,\tilde G(x,\cdot)\rangle_{x,\tilde E}\in W^{\ell,2}(\mathcal E),
\end{equation}
where $x\in \tilde M$. The condition on $\ell$ guarantees that the space $W^{2-\ell,2}_0(\mathcal E)$ embeds
into the H\"older space $C^{0,\alpha}(\mathcal E)$ for some $\alpha>0$, and hence, the dual space $W^{\ell-2,2}(\mathcal E)$ contains the delta function. Then, by elliptic regularity we conclude that $\tilde G(x,\cdot)$ lies in $W^{\ell,2}(\mathcal E)$. In addition, it is straightforward to show that
$$
\abs{\tilde G(x_1,\cdot)-\tilde G(x_2,\cdot)}_{W^{\ell,2}}\leqslant C_1\dist(x_1,x_2)^\alpha
$$
for some constant $C_1>0$, where for simplicity we may assume that the points $x_1$ and $x_2\in\tilde M$ lie in the same chart. Thus, we conclude that the map $\mathcal G$ is continuous. Similarly, we have the following statement.
\begin{lemma}
\label{smooth}
Let $\ell$ be an integer such that $\ell<1-n/2$. Then the map $\mathcal G:\tilde E\to W^{\ell,2}(\mathcal E)$ defined by~\eqref{g:def} is $C^1$-smooth.
\end{lemma}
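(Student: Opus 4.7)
The map $\mathcal G$ is linear on each fibre of $\tilde E$, so differentiability in the fibre directions is automatic. Working in a local trivialisation $\{e_i(x)\}$ of $\tilde E$ near a point $x_0\in\tilde M$ and writing $v_x=\sum v^i e_i(x)$, we have
$$
\mathcal G(v_x)=\sum_i v^i F_i(x),\qquad F_i(x):=\langle e_i(x),\tilde G(x,\cdot)\rangle_{x,\tilde E}\in W^{\ell,2}(\mathcal E).
$$
Hence the lemma reduces to showing that each $W^{\ell,2}$-valued function $x\mapsto F_i(x)$ is $C^1$-smooth on the base chart.

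The condition $\ell<1-n/2$ is equivalent to $2-\ell>1+n/2$, so the Sobolev embedding gives $W^{2-\ell,2}_0(\mathcal E)\hookrightarrow C^{1,\alpha}(\mathcal E)$ for some $\alpha>0$. By duality, $W^{\ell-2,2}$ then contains not only the delta distribution at any interior point, but also its first-order directional derivatives, and these derivatives depend H\"older-continuously on the base point in the $W^{\ell-2,2}$-norm. The candidate for $\partial_{x^j}F_i(x)$ is the unique $W^{\ell,2}$-solution $H_i^j(x)$ of the Dirichlet problem $\Delta^{\tilde E}_y u=T_i^j(x)$, where $T_i^j(x)\in W^{\ell-2,2}$ is the formal $x^j$-derivative of the source defining $F_i$ through the identity $\Delta^{\tilde E}_y F_i(x)=\langle e_i(x),\delta_x\rangle_{x,\tilde E}$. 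Existence and boundedness of this solution follow from elliptic regularity for the connection Laplacian, and continuity of $x\mapsto H_i^j(x)$ follows by combining that regularity with the above H\"older estimate, exactly as in the continuity argument for $\mathcal G$ already given in the excerpt.

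To conclude that $H_i^j(x)$ is the Fr\'echet derivative of $F_i$ at $x$, I would analyse the remainder
$$
R_h(x):=\frac{F_i(x+he_j)-F_i(x)}{h}-H_i^j(x),
$$
which satisfies an elliptic equation $\Delta^{\tilde E}_y R_h(x)=S_h(x)\in W^{\ell-2,2}$, whose right-hand side is the difference of the forward finite-difference of the $x$-dependent source $\langle e_i(\cdot),\delta_\cdot\rangle$ and its continuous derivative $T_i^j(x)$. The main technical point is to show that $\|S_h(x)\|_{W^{\ell-2,2}}\to 0$ as $h\to 0$, uniformly for $x$ in a compact set. This is a quantitative first-order Taylor estimate for an $x$-dependent distribution, and it relies precisely on the additional derivative afforded by $W^{2-\ell,2}_0\hookrightarrow C^{1,\alpha}$ under the strengthened condition $\ell<1-n/2$. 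Once this estimate is in place, elliptic regularity transfers the convergence to the $W^{\ell,2}$-norm of $R_h(x)$, and the rest of the argument is standard and closely mirrors the continuity proof for $\mathcal G$ immediately preceding the statement of the lemma.
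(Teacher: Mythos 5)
Your proposal is correct and follows essentially the same route as the paper. Both arguments reduce, by fibrewise linearity, to the $C^1$-smoothness of the base map $x\mapsto\tilde G(x,\cdot)$; both identify the candidate derivative as the formal $x$-derivative of the Green kernel; and both hinge on the condition $\ell<1-n/2$ via the Sobolev embedding $W^{2-\ell,2}_0\hookrightarrow C^{1,\alpha}$ (equivalently, as you phrase it, that $W^{\ell-2,2}$ contains first-order derivatives of the delta and that these vary H\"older-continuously in the base point). The only structural difference is one of duality: you place the differentiation on the source side, writing $\tilde G(x,\cdot)$ as the Dirichlet solution with distributional source and proposing the remainder estimate $\|S_h(x)\|_{W^{\ell-2,2}}\to 0$, whereas the paper pairs $\tilde G(x,\cdot)$ against a fixed test section $\varphi\in W^{-\ell,2}_0$, observes that the resulting scalar section $\tilde\psi$ is the Dirichlet solution with source $\tilde\varphi$ and hence lies in $C^{1,\alpha}$ with norm controlled by $|\varphi|_{-\ell,2}$, and then completes the Taylor estimate concretely via Hadamard's formula. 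The step you flag as ``the main technical point'' is exactly the one the paper supplies by that computation, so your outline is compatible with the paper's proof rather than a genuinely different one.
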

For the sake of completeness we prove Lemma~\ref{smooth} in Section~\ref{auxs}. It allows us to study the map $\mathcal G$ from a viewpoint of differential geometry. As we shall see below,  the map $\mathcal G$ is a linear embedding on each fibre $\tilde E_x$ for $x\notin \partial\tilde M$, and collapses the fibre to the origin for $x\in\partial\tilde M$. Further, it maps the base manifold $\tilde M$, viewed as the image of the zero section, to zero in $W^{\ell,2}(\mathcal E)$. To avoid these degeneracies we often restrict it to the open set $\tilde E^0$ obtained by considering $\tilde E$ on the interior of $\tilde M$ and removing the zero section. The following statement shows that the map $\mathcal G$ is well-behaved on $\tilde E^0$.
\begin{lemma}
\label{imm}
Let $\ell$ be an integer such that $\ell<1-n/2$. Then the map $\mathcal G:\tilde E\to W^{\ell,2}(\mathcal E)$ defined by~\eqref{g:def} is a linear embedding on each fibre $\tilde E_x$, where $x\notin \partial\tilde M$. Moreover, it is an injective immersion on the set $\tilde E^0$, obtained by removing the image of the zero section from $\tilde E$ over the interior of $\tilde M$.
\end{lemma}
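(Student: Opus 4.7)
My plan relies on two ingredients: real-analyticity of harmonic sections of $\tilde E$, yielding the identity principle on connected open sets where they are defined, and the classical parametrix behaviour of the Green kernel near the diagonal, namely
$$
\tilde G(x,y)=c_n \dist_{\tilde g}(x,y)^{2-n}\,\tau_{xy}+O\bigl(\dist_{\tilde g}(x,y)^{3-n}\bigr)\quad\text{as }y\to x,
$$
where $\tau_{xy}\in\tilde E_x\otimes\tilde E_y$ is the non-degenerate tensor induced by parallel transport along the minimising geodesic. For fibre injectivity, I fix $x\in\tilde M\setminus\partial\tilde M$ and assume $\mathcal G(v_x)=0$ in $W^{\ell,2}(\mathcal E)$. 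The section $z\mapsto\langle v_x,\tilde G(x,z)\rangle_{x,\tilde E}$ is harmonic and real-analytic on $\tilde M\setminus\{x\}$, and its distributional vanishing on $U$ forces pointwise vanishing on $U\setminus\{x\}$. Since $\tilde M\setminus\{x\}$ is connected when $n\geqslant 3$, unique continuation propagates the vanishing to all of $\tilde M\setminus\{x\}$. Using the parametrix, the leading term near $x$ is $c_n\dist_{\tilde g}(x,z)^{2-n}P_{xz}v_x$, where $P_{xz}$ is parallel transport; this is non-zero unless $v_x=0$, which yields fibre injectivity.

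For injectivity on $\tilde E^0$, suppose $\mathcal G(v_x)=\mathcal G(w_y)$ with $v_x,w_y\neq 0$ and $x,y$ interior to $\tilde M$. I set
$$
u(z)=\langle v_x,\tilde G(x,z)\rangle_{x,\tilde E}-\langle w_y,\tilde G(y,z)\rangle_{y,\tilde E}.
$$
Then $u$ is harmonic on the connected set $\tilde M\setminus\{x,y\}$ and vanishes on $U$, so by unique continuation $u\equiv 0$ on $\tilde M\setminus\{x,y\}$. If $x\neq y$, then near $x$ the second summand is real-analytic while the first retains the non-zero singularity $c_n\dist_{\tilde g}(x,z)^{2-n}P_{xz}v_x$, a contradiction. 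Hence $x=y$, and applying the first part to $v_x-w_x$ gives $v_x=w_x$.

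For the immersion assertion, fix $(x_0,v_0)\in\tilde E^0$ and decompose a tangent vector at $(x_0,v_0)$ as $(X,w)\in T_{x_0}\tilde M\oplus\tilde E_{x_0}$ using the horizontal-vertical splitting induced by $\tilde\nabla$. Differentiating along a test curve and using compatibility of the Euclidean structure with $\tilde\nabla$, one obtains
$$
d\mathcal G_{(x_0,v_0)}(X,w)(z)=\langle w,\tilde G(x_0,z)\rangle_{x_0,\tilde E}+\langle v_0,(\tilde\nabla^{(1)}_X\tilde G)(x_0,z)\rangle_{x_0,\tilde E},
$$
where $\tilde\nabla^{(1)}$ denotes covariant differentiation in the first argument. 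If this vanishes in $W^{\ell,2}(\mathcal E)$, unique continuation again propagates the vanishing to $\tilde M\setminus\{x_0\}$. The first summand has leading singularity of order $\dist_{\tilde g}(x_0,z)^{2-n}$, whereas differentiating the parametrix in $x$ produces for the second summand a strictly more singular term of order $\dist_{\tilde g}(x_0,z)^{1-n}$, with angular coefficient proportional to $\langle X,\hat r\rangle P_{x_0z}v_0$, where $\hat r$ is the unit vector from $x_0$ to $z$. Matching singularity orders forces this angular function to vanish for every direction $\hat r$, which together with $v_0\neq 0$ gives $X=0$. The remaining identity $\mathcal G(w)=0$ combined with fibre injectivity then yields $w=0$.

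The main obstacle I foresee is the careful extraction of the leading and subleading terms in the parametrix of $\tilde G$ and of $\tilde\nabla^{(1)}\tilde G$ near the diagonal, in order to justify the singularity-order bookkeeping above. Once these classical asymptotics are recorded, all three assertions reduce to variants of the same unique-continuation argument combined with the standard decomposition of $T\tilde E$ via the connection.
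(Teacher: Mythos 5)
Your argument is correct and reaches all three conclusions, but it takes a genuinely different route from the paper's own proof. Where you extract the leading and subleading terms of the parametrix of $\tilde G$ near the diagonal and do singularity-order bookkeeping, the paper instead pairs the section (known to vanish on all of $\tilde M$ minus finitely many points, by unique continuation from $U$) against a well-chosen test section $s\in\mathcal D(\tilde E)$ and appeals directly to the defining identity
$$
\int_{\tilde M}\langle\tilde G(x,y),\Delta^{\tilde E}s(y)\rangle_{y,\tilde E}\,d\vol_g(y)=s(x)
$$
of the Dirichlet Green kernel. For fibre injectivity one picks $s$ with $s(x)=v_x$ and obtains $\langle v_x,v_x\rangle=0$; for injectivity on $\tilde E^0$, a section with $s(x_1)=v_{x_1}$, $s(x_2)=0$; for the immersion, first an $s$ with $s(x)=\xi$ and $\nabla_Xs|_x=0$ to force $\xi=0$, then an $s$ with $\nabla_Xs|_x=v_x$ to force $X=0$. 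This duality-against-test-sections scheme sidesteps the parametrix analysis near the diagonal entirely, which is exactly the technical burden you flag at the end of your writeup, whereas your route makes the geometry of the singularity explicit (the parallel transport $P_{xz}$, the angular factor $\langle X,\hat r\rangle$). Both proofs share the same prerequisites: real-analyticity of harmonic sections for unique continuation from $U$ into $\tilde M$ minus finitely many points, and the horizontal/vertical splitting of $T\tilde E$ induced by $\tilde\nabla$ to express the differential $D_{v_x}\mathcal G$ in the form you display. If you pursue your version, do record carefully that the local fundamental solution differs from the Dirichlet Green kernel by a term that is harmonic, hence real-analytic, near $x$, so that the diagonal singularity is entirely carried by the parametrix.
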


Note that the image of the total space $\tilde E$ under $\mathcal G$ can be viewed as the cone whose link is the image of the subset $S_1\tilde E$ that is formed by vectors of unit length. Then the image of $\mathcal G(\tilde E^0)$ is precisely the set obtained by removing the origin from this cone. By Lemmas~\ref{smooth} and~\ref{imm} it is straightforward to see that the set $\mathcal G(\tilde E^0)$ is a $C^1$-smooth submanifold of $W^{\ell,2}(\mathcal E)$. The main idea behind the proof of Theorem~\ref{t1} is to recover the topology and geometry of $\tilde E$ from this image.

We end this discussion with a lemma that describes another property of the image of $\mathcal G$.
\begin{lemma}
\label{sum}
For given two distinct points $q_1$ and $q_2$ in the interior of $\tilde M$ let $\mathcal W$ be the direct sum $\mathcal G((\tilde E)_{q_1})\oplus\mathcal G((\tilde E)_{q_2})$, viewed as a subspace of $W^{\ell,2}(\mathcal E)$.
Suppose that for some point $x\in\tilde M$ the intersection $\mathcal G((\tilde E)_x)\cap\mathcal W$ is non-trivial. Then the point $x$ has to coincide with one of the points $q_1$ or $q_2$.
\end{lemma}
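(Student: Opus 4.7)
My plan is to rewrite the intersection hypothesis as an identity of harmonic sections on $U$, propagate it by unique continuation to the complement of $\{x,q_1,q_2\}$ in $\tilde M$, and then derive a contradiction from the known singularity of the Green kernel on the diagonal. First I would rule out $x\in\partial\tilde M$: by the symmetry of $\tilde G$ together with property~(iii) of the Dirichlet Green kernel one has $\tilde G(x,y)\equiv 0$ for all $y\ne x$ when $x\in\partial\tilde M$, so $\mathcal G(\tilde E_x)=\{0\}$ and the intersection would be trivial. Hence $x$ must be interior; by Lemma~\ref{imm} the restriction of $\mathcal G$ to $\tilde E_x$ is a linear injection, so non-triviality of $\mathcal G(\tilde E_x)\cap\mathcal W$ provides a non-zero vector $v_x\in\tilde E_x$ and vectors $v_{q_i}\in\tilde E_{q_i}$, $i=1,2$, for which
\[
\langle v_x,\tilde G(x,\cdot)\rangle_{x,\tilde E}=\langle v_{q_1},\tilde G(q_1,\cdot)\rangle_{q_1,\tilde E}+\langle v_{q_2},\tilde G(q_2,\cdot)\rangle_{q_2,\tilde E}
\]
as elements of $W^{\ell,2}(\mathcal E)$, that is, as sections on $U$. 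Suppose for contradiction that $x\notin\{q_1,q_2\}$.

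Next I would set $V:=\tilde M\setminus\{x,q_1,q_2\}$ and define the section
\[
s(y)=\langle v_x,\tilde G(x,y)\rangle_{x,\tilde E}-\langle v_{q_1},\tilde G(q_1,y)\rangle_{q_1,\tilde E}-\langle v_{q_2},\tilde G(q_2,y)\rangle_{q_2,\tilde E}
\]
on $V$. Away from each of its singularities, the kernel $\tilde G(p,\cdot)$ is a harmonic section of $\tilde E$, so $s$ is harmonic on $V$. Because the metric $\tilde g$, the connection $\tilde\nabla$, and the Euclidean structure are all real-analytic, the classical Petrowsky--John analytic regularity result for strongly elliptic analytic systems makes $s$ real-analytic on $V$. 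The identity above says $s$ vanishes in $W^{\ell,2}(\mathcal E)$ on the open set $U\subset V$, hence pointwise there by smoothness; since $\tilde M$ is connected and $\dim\tilde M\geqslant 3$, removing the three points $x,q_1,q_2$ leaves $V$ connected, and the identity theorem for real-analytic sections yields $s\equiv 0$ on $V$.

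To reach a contradiction I would study $s$ as $y\to x$. The local existence results of John and Lopatinskii for the strongly elliptic system $\Delta^{\tilde E}=-\mathit{trace}_{\tilde g}\tilde\nabla^2$ (with principal symbol $\abs{\xi}^2_{\tilde g}\Id$) produce a Green matrix whose leading singularity, in a local frame near $x$ and for $n\geqslant 3$, has the form $c\,\dist(x,y)^{2-n}\cdot\mathrm{Id}$ with $c>0$, modulo terms of lower order. Since $v_x\ne 0$, the norm of $\langle v_x,\tilde G(x,y)\rangle_{x,\tilde E}$ blows up as $y\to x$, whereas the two remaining summands of $s$ are smooth at $y=x$ because $x\notin\{q_1,q_2\}$. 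This contradicts $s\equiv 0$ on $V$ and forces $x\in\{q_1,q_2\}$.

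The main obstacle I foresee is the unique-continuation step: one has to justify that a harmonic section of $\tilde E$ on an open set of $\tilde M$ is in fact real-analytic there, which rests on the real-analyticity of all the structure on $\tilde M$ together with the Petrowsky analyticity theorem for strongly elliptic systems with real-analytic coefficients. Once this is in place, the singularity comparison is an essentially local computation and the remainder of the argument is formal.
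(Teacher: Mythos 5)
Your proof is correct, but the final contradiction is reached by a genuinely different argument than the paper's. Both begin identically: extract a non-zero $v_x$ and rewrite the hypothesis as the identity
\[
\langle v_x,\tilde G(x,y)\rangle_{x,\tilde E}=\langle w_{q_1},\tilde G(q_1,y)\rangle_{q_1,\tilde E}+\langle w_{q_2},\tilde G(q_2,y)\rangle_{q_2,\tilde E}
\]
on $U$, then propagate it by real-analytic unique continuation to $\tilde M\setminus\{x,q_1,q_2\}$. From that point the paper pairs the identity against a compactly supported test section $s\in\mathcal D(\tilde E)$ chosen with $s(x)=v_x$ and vanishing at $q_1,q_2$, and uses only the defining relation~\eqref{def:rel} of the Dirichlet Green kernel to obtain $\langle v_x,v_x\rangle=0$ directly. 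You instead appeal to the asymptotics of the Green matrix near the diagonal: for $n\geqslant 3$ the leading term $c\,\dist(x,y)^{2-n}\Id$ forces the left-hand side to blow up as $y\to x$, while the right-hand side stays bounded. Both are valid, and you correctly supply the side remark (implicit in the paper) that $x\in\partial\tilde M$ is excluded since $\mathcal G$ collapses boundary fibres to zero. The trade-off: the paper's test-section pairing is entirely self-contained, using nothing beyond the definition and symmetry of the Dirichlet Green kernel and paralleling the computations already carried out in the proof of Lemma~\ref{imm}; your route instead invokes the explicit local singularity structure coming from the John--Lopatinskii parametrix, which is standard and available in the framework the paper cites, but is extra machinery that the paper's own argument manages to avoid.
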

Proofs of Lemmas~\ref{imm} and~\ref{sum} appear in Section~\ref{auxs}. We continue with a discussion of the main result, Theorem~\ref{t1}.

\subsection{The main result}
Now let $E_i$ be two real-analytic vector bundles over real-analytic manifolds $M_i$, where $i=1,2$, and suppose that for some open sets $\Sigma_i\subset\partial M_i$ there exists a vector bundle isomorphism
$\phi:\left.E_1\right|_{\Sigma_1}\to \left.E_2\right|_{\Sigma_2}$ that intertwines with the Dirichlet-to-Neumann operators $\Lambda_{\Sigma_1}$ and $\Lambda_{\Sigma_2}$. Suppose that $\phi$ covers a diffeomorphism
$\psi:\Sigma_1\to\Sigma_2$, that is $\pi_2\circ\phi=\psi\circ\pi_1$. For a fixed point $p_1\in\Sigma_1$ we set $p_2=\psi(p_1)$, and choose local coordinates on the $\Sigma_i$'s around these points that are related by $\psi$. Note that by Proposition~\ref{loc} the metrics $g_i$ coincide in such coordinates. Thus, making the $\Sigma_i$'s smaller if necessary, we see that the map $\psi:\Sigma_1\to\Sigma_2$ is an isometry.
Since the metrics are real-analytic, by Proposition~\ref{loc} we also conclude that their extensions $\tilde g_i$ coincide in neighbourhoods of the points $p_i$  in $\tilde M_i$. In other words, the isometry $\psi:\Sigma_1\to\Sigma_2$ extends to a real-analytic isometry $\Psi:W_1\to W_2$, defined by identifying boundary normal coordinates, where $W_i$ is a neighbourhood of the point $p_i$ in $\tilde M_i$. In the sequel, we also identify the sets $W_1\backslash \bar M_1$ and $W_2\backslash \bar M_2$, and denote them by $U$.

Similarly, choosing orthonormal frames related by $\phi$, we may identify the trivialisations of $\left.E_1\right|_{\Sigma_1}$ and $\left.E_2\right|_{\Sigma_2}$ around the points $p_1$ and $p_2=\psi(p_1)$. They extend to trivial vector bundles, which we may assume are defined over $W_1$ and $W_2$. Following the discussion above, we obtain vector bundles $\tilde E_1$ and $\tilde E_2$, defined over $\tilde M_1$ and $\tilde M_2$ respectively, and equipped with inner products that are extensions of the original ones on $E_1$ and $E_2$. Making $W_1$ and $W_2$ smaller, if necessary, we may also assume that boundary normal frames of $\tilde E_1$ and $\tilde E_2$ are defined on $W_1$ and $W_2$ respectively. Then, the isomorphism $\phi$ extends to the Euclidean isomorphism 
$$
\Phi:\left.\tilde E_1\right|_{W_1}\to\left.\tilde E_2\right|_{W_2},
$$
defined by identifying the corresponding boundary normal frames. Note that $\Phi$ covers the isometry $\Psi:W_1\to W_2$. By Proposition~\ref{loc} the real-analytic connection matrices of $\nabla^1$ and $\nabla^2$ coincide in such frames, and we conclude that the isomorphism $\Phi$ is a gauge equivalence, that is $\Phi^*\tilde\nabla^2=\tilde\nabla^1$. We continue to use the notation $\mathcal E$ for the vector bundles $\left.\tilde E_i\right|_U$.

Below by $\tilde E^0_i$ we denote the vector bundles $\tilde E_i$ with removed zero sections over the interiors of $\tilde M_i$ , where $i=1,2$. By the hypotheses of Theorem~\ref{t1} the dimensions of the base manifolds $M_i$ coincide; we denote this integer by $n$. Theorem~\ref{t1} is a consequence of the following statement. 
\begin{theorem}
\label{t1a}
Under the hypotheses of Theorem~\ref{t1}, consider the maps $\mathcal {G}_i:\tilde E_i\to W^{\ell,2}(\mathcal E)$ defined by~\eqref{g:def}, where $i=1,2$, and $\ell$ is an integer such that $\ell<1-n/2$. Suppose that the vector bundle isomorphism $\Phi:\mathcal E\to\mathcal E$, described above, intertwines with the $\mathcal G_i$'s, that is 
\begin{equation}
\label{intert}
\mathcal G_2\circ\Phi=\Phi\circ\mathcal G_1\qquad\text{ on~ }~\mathcal E. 
\end{equation}
Then the images $\mathcal G_2(\tilde E_2^0)$ and $\Phi\circ\mathcal G_1(\tilde E_1^0)$ coincide as subsets in $W^{\ell,2}(\mathcal E)$, and the map $\mathcal G_2^{-1}\circ\Phi\circ\mathcal G_1:\tilde E_1^0\to \tilde E_2^0$ extends  to a real-analytic vector bundle isomorphism $J:\tilde E_1\to\tilde E_2$ that covers an isometry $j:\tilde M_1\to\tilde M_2$ such that $J^*\tilde\nabla^2=\tilde\nabla^1$.
\end{theorem}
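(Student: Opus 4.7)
The plan follows the strategy of \cite{LTU03}, now adapted to vector bundles. The goal is to extend the matching of fibres already known over $U$ to a matching over all of $\tilde M_1$ and $\tilde M_2$, and then to verify that the resulting bundle map is a gauge equivalence. I would begin by upgrading the $C^1$-smoothness of Lemma~\ref{smooth} to real-analyticity of the maps $\mathcal G_i$: the Green kernel $\tilde G_i(x,y)$ is real-analytic in $y$ off the diagonal, and by its symmetry $\tilde G_i(y,x) = \tau_{x,y}\tilde G_i(x,y)$ it is then real-analytic in both variables. Consequently the images $\Gamma_i := \mathcal G_i(\tilde E_i^0)$ are real-analytic submanifolds of $W^{\ell,2}(\mathcal E)$. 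The intertwining hypothesis then gives, for every $x \in U$, the fibrewise identity $\Phi\circ\mathcal G_1(\tilde E_1|_x) = \mathcal G_2(\tilde E_2|_x)$, which serves as the seed for the continuation.

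The heart of the argument is to propagate this identity to every interior point of $\tilde M_1$. Fix $x_1$ in the interior of $\tilde M_1$ and consider the section $\sigma_{x_1} : y \mapsto \tilde G_1(x_1, y)$, which is real-analytic and harmonic on $\tilde M_1 \setminus \{x_1\}$. Its restriction to $U$ transports via $\Phi$ to a real-analytic section of $\tilde E_2$ over $U$, again harmonic because $\Phi$ is a gauge equivalence there. By unique continuation of real-analytic harmonic sections, this section continues to a real-analytic section of $\tilde E_2$ defined on the interior of $\tilde M_2$ minus a discrete singular set. The key technical claim, and the main obstacle, is that this singular set consists of a single point $x_2 \in \tilde M_2$ at which the continued section develops precisely a Green-kernel singularity. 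The point is that the universal leading diagonal asymptotic $\tilde G_i(x,y) \sim c_n\, d_i(x,y)^{2-n}\,\Id$ is dictated solely by the principal symbol of the connection Laplacian (which depends only on the dimension); matching it forces both a unique location for $x_2$ and a linear isomorphism $J_{x_1} : \tilde E_1|_{x_1} \to \tilde E_2|_{x_2}$ read off from the leading coefficient. Setting $j(x_1) = x_2$ and assembling the $J_{x_1}$ produces candidate maps $j$ and $J$ which by construction satisfy $\mathcal G_2 \circ J = \Phi \circ \mathcal G_1$; real-analyticity of $j$ and $J$ follows from the real-analytic dependence of $\sigma_{x_1}$ on $x_1$.

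Finally I would verify the remaining assertions. Injectivity of $j$ is immediate from Lemma~\ref{sum}: if $j(x_1) = j(x_1')$ then the fibres $\Phi\circ\mathcal G_1(\tilde E_1|_{x_1})$ and $\Phi\circ\mathcal G_1(\tilde E_1|_{x_1'})$ both coincide with $\mathcal G_2(\tilde E_2|_{j(x_1)})$, which is forbidden unless $x_1 = x_1'$. Surjectivity follows by the symmetric construction, reversing the roles of $\tilde M_1$ and $\tilde M_2$. Since $J$ intertwines the Green kernels on $\tilde M_1 \setminus \{x_1\}$ and $\tilde M_2 \setminus \{j(x_1)\}$ for every interior $x_1$, it intertwines the Dirichlet problems on small interior subdomains, so Proposition~\ref{loc:gauge} applied on shrinking geodesic balls forces $j$ to be an isometry and $J^*\tilde\nabla^2 = \tilde\nabla^1$. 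Extension to the boundary and the compatibility $J|_{\Sigma_1} = \phi$ follow from the boundary-normal-coordinate construction, which had already identified the extended data through $\Phi$. The equality $\Phi\circ\mathcal G_1(\tilde E_1^0) = \mathcal G_2(\tilde E_2^0)$ is then immediate from bijectivity of $J$ together with the intertwining $\mathcal G_2\circ J = \Phi\circ\mathcal G_1$.
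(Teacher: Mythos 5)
Your plan correctly identifies the ingredients (Green-kernel immersions, real-analyticity, Lemma~\ref{sum} for injectivity) and the overall LTU-style strategy, but the central step---propagating the fibre-matching from $U$ to all of $\tilde M_1$---is asserted rather than proved, and this is precisely where the paper's work lies. You write that the harmonic section $y\mapsto\Phi\bigl(\tilde G_1(x_1,y)\bigr)$ on $U$ ``continues to a real-analytic section of $\tilde E_2$ defined on the interior of $\tilde M_2$ minus a discrete singular set,'' and that this singular set is a single point at which a Green-kernel singularity develops. Neither of these is automatic: real-analytic continuation of a local solution of an elliptic system gives uniqueness along a path but not existence of a global, single-valued extension, let alone control of where singularities form. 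You flag this yourself as ``the key technical claim, and the main obstacle,'' but the proposed justification (matching of the leading diagonal asymptotic) presupposes that the continuation already terminates at an isolated Green-type singularity. This is circular.

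The paper resolves this by an open-and-closed argument on the base. It introduces the set $B_1\subset\tilde M_1$ of points where the fibre images already match, shows $W_1\subset B_1$, and supposing $B_1\neq\tilde M_1$ picks $x_1\in\partial B_1$ in the interior. Lemma~\ref{step1} extends the matching to the fibre over $x_1$ by a compactness argument, ruling out the limit point escaping to $\partial\tilde M_2$ because the immersion $\mathcal G_2$ collapses boundary fibres to zero while $\Phi\circ\mathcal G_1(v_{x_1})\neq 0$ by Lemma~\ref{imm}. Steps~2--3 then represent the images $\Phi(\mathcal R_1)$ and $\mathcal R_2$ locally as graphs over the common tangent space via the inverse function theorem, prove real-analyticity of the finite-dimensional coordinate functions (Lemma~\ref{step2}), conclude the graphs coincide near $u$ (Lemma~\ref{step3a}), and use Lemma~\ref{sum} to promote vectorwise matching to fibrewise matching on a neighbourhood of $x_1$ (Lemma~\ref{step3b}). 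This contradicts $x_1\in\partial B_1$, so $B_1=\tilde M_1$. Your proposal has no substitute for this open-and-closed machinery.

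Two smaller issues. First, you assert that $\mathcal G_i$ is real-analytic as a map into $W^{\ell,2}(\mathcal E)$ and hence that $\mathcal G_i(\tilde E_i^0)$ is a real-analytic submanifold; the paper only establishes $C^1$-smoothness (Lemma~\ref{smooth}) and carefully works with real-analyticity of finite-dimensional projections $(\mathcal G_i,\varphi_j)_{\ell,2}$ rather than the infinite-dimensional map itself. Second, your derivation of the isometry and gauge-equivalence conditions via Proposition~\ref{loc:gauge} applied to ``shrinking geodesic balls'' is awkward, since that proposition concerns the Dirichlet-to-Neumann map on the manifold boundary; the paper instead observes that $J$ agrees with the already-known gauge equivalence $\Phi$ (covering the isometry $\Psi$) over $W_1$, and then propagates $J^*\tilde\nabla^2=\tilde\nabla^1$ and $j^*\tilde g_2=\tilde g_1$ by unique continuation of real-analytic tensors on the connected manifold $\tilde M_1$.
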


Now we show how Theorem~\ref{t1a} implies Theorems~\ref{t1}.

\begin{proof}[Proof of Theorem~\ref{t1}]
First, since the vector bundle isomorphism $\phi:\left.E_1\right|_{\Sigma_1}\to\left.E_2\right|_{\Sigma_2}$ preserves inner products on $E_1$ and $E_2$, then so does its extension $\Phi:\mathcal E\to\mathcal E$. This statement follows directly from the definition of $\Phi$ as an isomorphism that identifies boundary normal frames. Now we claim that the conclusion of Theorem~\ref{t1a} implies Theorem~\ref{t1}. Indeed, by relation~\eqref{intert}, we see that the vector bundle isomorphism $J:\tilde E_1\to\tilde E_2$ coincides with $\Phi$ on the set $\left.\tilde E_1\right|U$, and the isometry $j:\tilde M_1\to\tilde M_2$ coincides with $\Psi$ on $U$. Thus, they are genuine extensions of the isomorphism $\phi$ and the isometry $\psi$ from the boundary, and satisfy the conclusions of Theorem~\ref{t1}. Since $\Phi$ preserves the inner products, we conclude that the products $\langle\cdot,\cdot\rangle_{\tilde E_1}$ and $J^*\langle\cdot,\cdot\rangle_{\tilde E_2}$ coincide on $\left.\tilde E_1\right|U$, and hence, by unique continuation coincide everywhere on $\tilde E_1$. Thus, the isomorphism $J$ preserves inner products, and its restriction to $E_1$ satisfies all conclusions of Theorem~\ref{t1}.

For a proof of Theorem~\ref{t1} we need to prove relation~\eqref{intert}, that is the vector bundle isomorhism $\Phi:\mathcal E\to\mathcal E$ intertwines with the immersions $\mathcal G_i$'s. Since $\Phi$ preserves Euclidean structures, for the latter it is sufficient to show that
$$
\tilde G_2(\Psi(x),\Psi(y))=\Phi^\boxtimes\tilde G_1(x,y)\qquad\text{ for all~ } (x,y)\in U\times U.
$$
Choosing coordinates on $W_1$ and $W_2$ related by $\Psi$, we may assume that $\Psi:W_1\to W_2$ is the identity. Similarly, choosing local trivialisations of the $\left.\tilde E_i\right|_{W_i}\simeq\mathcal E$ related by $\Phi$, we assume that so is $\Phi$. Thus, it remains to show that the Green matrices $\tilde G_1$ and $\tilde G_2$, viewed as sections of the trivial bundle $\mathcal E\boxtimes\mathcal E$, coincide. For classical Green functions, that is when the rank of $\mathcal E$ equals one, this statement is well known, see~\cite[Lemma~2.1]{LTU03}. It is a consequence of standard regularity theory together with uniqueness of Dirichlet Green functions. Below we outline a version of this argument in our setting.

First, since under our assumptions  the isomorphism $\Phi$ is the identity on $\mathcal E$, the hypothesis in Theorem~\ref{t1} means that the Dirichlet-to-Neumann operators $\Lambda_1$ and $\Lambda_2$ restricted to sections supported in $W_1\cap\partial M_1$ and $W_2\cap\partial M_2$ respectively, coincide. Pick a point $x\in U$, and for a non-zero vector  $v_x$ in the fibre $\mathcal E_x$ consider  a solution $s$ to the Dirichlet problem
$$
\Delta^{E_2}s=0,\qquad \left.s\right|_{\partial M_2}=\langle v_x,\tilde G_1(x,\cdot)\rangle_{x,\tilde E_1},
$$
on $M_2$. We define a continuous section $\tilde s$ of $\tilde E_2$ away from $x$ by extending $s$ as $\langle v_x,\tilde G_1(x,\cdot)\rangle_{x,\tilde E_1}$ on $U\backslash\{x\}$. Note that the section $\langle v_x,\tilde G_1(x,\cdot)\rangle_{x,\tilde E_1}$ solves the Dirichlet problem
$$
\Delta^{E_1}s=0,\qquad \left.s\right|_{\partial M_1}=\langle v_x,\tilde G_1(x,\cdot)\rangle_{x,\tilde E_1},
$$
and since the Dirichlet-to-Neumann operators coincide, we conclude that so do the normal derivatives of $s$ and $\langle v_x,\tilde G_1(x,\cdot)\rangle_{x,\tilde E_1}$ on the boundary $W_2\cap\partial M_2$. Thus, the section $\tilde s$ is $C^1$-smooth, and the standard application of Green's formulae shows that $\tilde s$ is weakly harmonic on $\tilde M_2\backslash\{x\}$, and hence, is smooth. Since a vector $v_x\in\mathcal E_x$ is arbitrary, and the Euclidean structures agree, this construction yields a smooth section $H(x,y)\in (\tilde E_2)_x\otimes (\tilde E_2)_y$ such that:
\begin{itemize}
\item $\Delta^{E_2}_yH(x,y)=0$ for $y\in M_2$;
\item $H(x,y)=\tilde G_1(x,y)$ for $y\in U$, $y\ne x$;
\item $H(x,y)=0$ for $y\in\partial\tilde M_2$.
\end{itemize}
In particular, we see that $\Delta^{E_2}_yH(x,\cdot)=\delta_x$ on $\tilde M_2$, and the standard argument used to prove uniqueness of the Dirichlet Green kernel shows that $H(x,y)$ coincides with the Dirichlet Green kernel $\tilde G_2(x,y)$ for all $y\in\tilde M_2$. Thus, the Green matrices $\tilde G_1(x,\cdot)$ and $\tilde G_2(x,\cdot)$ indeed coincide on the set $U\backslash\{x\}$, and we are done.
\end{proof}

\subsection{Proof of Theorem~\ref{t1a}}
We start with outlining the general strategy of a proof. Let $B_1\subset\tilde M_1$ be the largest connected open set containing the fixed point $p_1\in\Sigma_1$ and such that for any $x\in B_1$ there exists a unique $j(x)\in \tilde M_2$ such that the images of fibres $\Phi\circ\mathcal G_1((\tilde E_1)_x)$ and $\mathcal G_2((\tilde E_2)_{j(x)})$ coincide and the operator
\begin{equation}
\label{J:def}
J_x=\mathcal G_2^{-1}\circ\Phi\circ\mathcal G_1: (\tilde E_1)_x\longrightarrow (\tilde E_2)_{j(x)}
\end{equation}
is an isometry with respect to the inner products on the fibres. Note that if the subspaces $\Phi\circ\mathcal G_1((\tilde E_1)_x)$ and $\mathcal G_2((\tilde E_2)_{j(x)})$ coincide, by Lemma~\ref{imm} the map $J_x$, defined in~\eqref{J:def}, is automatically an isomorphism of the fibres, and defines a fibre preserving map $J:\left.\tilde E_1\right|_{B_1}\to\tilde E_2$. 

First, we claim that the set $B_1$ contains the neighbourhood $W_1$ of $p_1$ constructed above. Indeed, since $\Phi$ intertwines with the maps $\mathcal G_1$ and $\mathcal G_2$ on $\mathcal E$, we have
\begin{equation}
\label{intert:ext}
\Phi_y\langle v_x,\tilde G_1(x,y)\rangle_{x}=\langle\Phi_{\Psi(x)}v_{\Psi(x)},\tilde G_2(\Psi(x),\Psi(y))\rangle_{x}
\end{equation}
for all $x,y\in U$, and $v_x\in(\tilde E_1)_x$. Choosing a real-analytic non-zero section $v$ on $W_1$, since both sides in the relation above are real-analytic, we conclude that this relation continues to hold for all $x\in W_1$, $y\in U$. Since $v_x$ may take arbitrary values, it is straightforward to see that for any $x\in W_1$ we may choose $\Psi(x)$ as the point $j(x)$ in the definition of the set $B_1$. Indeed, relation~\eqref{intert:ext} implies that for any $x\in W_1$ the operator $J_x$ coincides with $\Phi:(\tilde E_1)_x\to(\tilde E_2)_{\Psi(x)}$, which is an isometry by its own definition. By Lemma~\ref{imm} it is a unique point that satisfies this condition. Thus, the set $W_1$ indeed lies in $B_1$.

Our main aim is to show that the set $B_1$ coincides with $\tilde M_1$. Once this statement is proved, we shall show that the map $J:\tilde E_1\to\tilde E_2$, defined on each fibre by relation~\eqref{J:def}, is a vector bundle isomorphism that satisfies the conclusions of the theorem.

Suppose the contrary, $B_1\ne\tilde M_1$. Then there exists a point $x_1\in\partial B_1$ that lies in the interior of $\tilde M_1$, that is $x_1\notin\partial\tilde M_1$. Since $W_1\subset B_1$, the point $x_1$ lies in the complement $\tilde M_1\backslash W_1$, and in particular, we see that $x_1\notin\bar U$. 

\noindent\underline{\em Step~1.}
First, we claim that the map $J$ can be extended to the fibre $(\tilde E_1)_{x_1}$ over $x_1$.
\begin{lemma}
\label{step1}
Let $x_1\in\partial B_1$ be a point such that $x_1\notin\partial\tilde M_1$. Then there exists a unique point $x_2$ in the interior of $\tilde M_2$ such that the images of fibres $\Phi\circ\mathcal G_1((\tilde E_1)_{x_1})$ and $\mathcal G_2((\tilde E_2)_{x_2})$ coincide, and the corresponding operator $J_{x_1}$ is an isometry. Moreover, for any non-zero vector $v_{x_1}\in(\tilde E_1)_{x_1}$ there exists a unique non-zero vector $w_{x_2}\in(\tilde E_2)_{x_2}$ such that
$$
\Phi\circ\mathcal G_1(v_{x_1})=\mathcal G_2(w_{x_2}),\qquad \abs{v_{x_1}}_{\tilde E_1}=\abs{w_{x_2}}_{\tilde E_2},
$$
and for any converging sequence $v_{p_k}\to v_{x_1}$, where $p_k\to x_1$, we have $J(v_{p_k})\to w_{x_2}$ as $k\to +\infty$.
\end{lemma}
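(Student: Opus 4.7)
The plan is to realise $(x_2, w_{x_2})$ as a limit along a sequence $p_k \in B_1$ converging to $x_1$, and then extract uniqueness from the injectivity statements in Lemma~\ref{imm}. Fix such a sequence, set $q_k = j(p_k) \in \tilde M_2$, and by compactness of $\tilde M_2$ pass to a subsequence with $q_k \to x_2 \in \tilde M_2$. For a fixed nonzero $v_{x_1} \in (\tilde E_1)_{x_1}$, choose $v_{p_k} \in (\tilde E_1)_{p_k}$ with $v_{p_k} \to v_{x_1}$ through a local trivialisation of $\tilde E_1$ near $x_1$. Since $\mathcal G_2 \circ J = \Phi \circ \mathcal G_1$ on $B_1$ and the maps $\mathcal G_1$, $\Phi$ are continuous (Lemma~\ref{smooth}), the sequence $\mathcal G_2(J(v_{p_k})) = \Phi \circ \mathcal G_1(v_{p_k})$ converges in $W^{\ell,2}(\mathcal E)$ to $\Phi \circ \mathcal G_1(v_{x_1})$, and this limit is nonzero because $\mathcal G_1$ is an embedding of the interior fibre at $x_1$ (Lemma~\ref{imm}).

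The central obstacle, which I expect to be the most technical step, is excluding $x_2 \in \partial \tilde M_2$. The bounded sub-case $|J(v_{p_k})| \leqslant C$ is handled directly by continuity of $\mathcal G_2$: a subsequential limit $J(v_{p_k}) \to w \in (\tilde E_2)_{x_2}$ would yield $\mathcal G_2(w) = \Phi \circ \mathcal G_1(v_{x_1}) \neq 0$, whereas $x_2 \in \partial \tilde M_2$ makes $\tilde G_2(x_2, \cdot)$ vanish on $\tilde M_2 \setminus \{x_2\}$ and hence $\mathcal G_2(w) = 0$, a contradiction. In the unbounded sub-case $|J(v_{p_k})| \to \infty$, normalising $u_k = J(v_{p_k})/|J(v_{p_k})| \to u$ with $|u| = 1$ and inserting the Dirichlet boundary asymptotic $\tilde G_2(q, y) \sim \dist(q, \partial \tilde M_2)\,\tilde P(q', y)$, where $\tilde P$ is the relevant Poisson section and $q'$ is the foot of $q$ on $\partial \tilde M_2$, the limit of the rescaled identity forces $\Phi \circ \mathcal G_1(v_{x_1}) = \alpha \langle u, \tilde P(\cdot, x_2)\rangle_{x_2}$ on $U$ for some $\alpha \neq 0$. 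Propagating both sides off $U$ by means of the Dirichlet-to-Neumann intertwining along $\Sigma$, the left-hand side extends to $\tilde M_1 \setminus \{x_1\}$ with an interior Dirac source at $x_1 \in \tilde M_1^\circ \setminus \bar U$, while the right-hand side extends to $\tilde M_2 \setminus \{x_2\}$ as a harmonic section whose singular support lies on $\partial \tilde M_2$; comparing the distributional Laplacians on the matched region produces the required contradiction and gives $x_2 \in \tilde M_2 \setminus \partial \tilde M_2$.

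Once $x_2$ is interior, Lemma~\ref{imm} together with continuity of $\mathcal G_2$ and compactness yields a constant $c > 0$ with $\|\mathcal G_2(v)\|_{W^{\ell,2}} \geqslant c|v|$ for $v$ in fibres over a neighbourhood of $x_2$; hence $|J(v_{p_k})| \leqslant c^{-1}\|\Phi \circ \mathcal G_1(v_{p_k})\|$ is bounded. A further subsequence gives $J(v_{p_k}) \to w_{x_2} \in (\tilde E_2)_{x_2}$; continuity of $\mathcal G_2$ yields $\mathcal G_2(w_{x_2}) = \Phi \circ \mathcal G_1(v_{x_1})$, and $w_{x_2} \neq 0$ because the right-hand side is nonzero. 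Uniqueness of the pair $(x_2, w_{x_2})$ is then immediate from injectivity of $\mathcal G_2$ on $\tilde E_2^0$ (Lemma~\ref{imm}): two candidates lying in $\tilde E_2^0$ and having the same $\mathcal G_2$-image must coincide in $\tilde E_2$, so their base points agree. The convergence $J(v_{p_k}) \to w_{x_2}$ for an arbitrary sequence $v_{p_k} \to v_{x_1}$ then upgrades from subsequential to full convergence: every subsequence of $J(v_{p_k})$ has, by the same boundedness argument, a convergent sub-subsequence, whose limit is forced to be $w_{x_2}$ by uniqueness.
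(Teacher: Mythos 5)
Your proof follows the same skeleton as the paper up to the crucial step: extract a sequence $p_k\to x_1$, pass to a convergent subsequence $q_k\to x_2$ by compactness, and fight to exclude $x_2\in\partial\tilde M_2$. It is precisely at this exclusion step that your proposal parts ways with the paper and, in the unbounded sub-case, leaves a genuine gap.

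The paper never splits into bounded and unbounded sub-cases. Instead it observes that $\varphi_k=\Phi\circ\mathcal G_1(v_k)$ converges to $\varphi\ne 0$, asserts that the image of $\mathcal G_2$ is closed in $W^{\ell,2}(\mathcal E)$ so that $\varphi\in\mathcal G_2((\tilde E_2)_{q_*})$ for some $q_*$, and then runs a local parametrisation argument near $q_*$ (using a smooth non-vanishing section $s$, the induced map $\bar s=\mathcal G_2\circ s$, and the injectivity statements of Lemma~\ref{imm}) to force $q_k\to q_*$; since $q_k\to q_0\in\partial\tilde M_2$ this gives $q_*\in\partial\tilde M_2$, and the contradiction is then immediate because $\mathcal G_2$ annihilates boundary fibres while $\varphi\ne 0$. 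Your bounded sub-case is correct and corresponds to the easy half of this reasoning. But note that under the assumption $x_2\in\partial\tilde M_2$ the sequence $\abs{J(v_{p_k})}$ is automatically unbounded (because $\mathcal G_2$ degenerates over $q_k$ while $\mathcal G_2(J(v_{p_k}))\to\varphi\ne 0$), so the entire burden of the exclusion falls on your unbounded sub-case.

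The unbounded sub-case in your write-up is a sketch, not a proof. You invoke a first-order Poisson asymptotic for $\tilde G_2(q,\cdot)$ near $\partial\tilde M_2$, normalise, pass to a limit, and then claim that \emph{propagating both sides off $U$ via the Dirichlet-to-Neumann intertwining and comparing distributional Laplacians on the matched region produces the required contradiction}. None of this is carried out: it is not shown that the rescaled limit exists in $W^{\ell,2}(\mathcal E)$, nor in what sense the two extensions (one with an interior Dirac singularity at $x_1\in\tilde M_1$, the other with a boundary Poisson singularity at $x_2\in\partial\tilde M_2$) live on a common manifold where they can be compared, nor why the comparison is inconsistent. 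This is the hardest part of the lemma, and as written it is the part you have not actually proved. You should either carry out this argument in full detail or replace it with the paper's closedness-plus-parametrisation route, which avoids the dichotomy entirely and stays within the framework already set up by Lemma~\ref{imm}.

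Your final steps (lower bound $\norm{\mathcal G_2(v)}\geqslant c\abs{v}$ over a compact interior neighbourhood, uniqueness of $(x_2,w_{x_2})$ from injectivity on $\tilde E_2^0$, and the subsequence trick upgrading subsequential to full convergence) are correct and match the paper's conclusion.
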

\begin{proof}
Let $p_k\in B_1$ be a sequence of points that converges to the point $x_1\in\partial B_1$, and $q_k$ the corresponding sequence of points such that the images of fibres $\Phi\circ\mathcal G_1((\tilde E_1)_{p_k})$ and $\mathcal G_2((\tilde E_2)_{q_k})$ coincide. Since $\tilde M_2$ is compact, then choosing a subsequence, which we denote by the same symbol $q_k$, we may assume that $q_k\to q_0\in\tilde M_2$ as $k\to+\infty$. For a non-zero vector $v_{x_1}\in(\tilde E_1)_{x_1}$ pick a sequence $v_k\in(\tilde E_1)_{p_k}$ that converges to $v_{x_1}$, and let $w_k\in (\tilde E_2)_{q_k}$ be the corresponding sequence such that
$$
\Phi\circ\mathcal G_1(v_k)=\mathcal G_2(w_k)\quad\text{ and }\quad \abs{v_k}_{\tilde E_1}=\abs{w_k}_{\tilde E_2}.
$$
Since the sequence $w_k$ is bounded, we may assume, again after choosing a subsequence, that $w_k$ converges to some vector $w_{q_0}\in (\tilde E_2)_{q_0}$ as $k\to+\infty$. It is straightforward to see that the norm of ${w_{q_0}}$ equals the one of ${v_{x_1}}$. Now for a proof of the lemma it remains to show that $q_0\notin\partial\tilde M_2$. If the latter holds, then we may take $q_0$ as $x_2$, and the statement follows directly by continuity of $\Phi\circ\mathcal G_1$ and $\mathcal G_2$. The uniqueness of the point $x_2$ and the vector $w_{x_2}$ is a consequence of Lemma~\ref{imm}.

Suppose the contrary, $q_0\in\partial\tilde M_2$. Then by continuity we obtain
$$
\Phi\circ\mathcal G_1(v_{x_1})=\lim\Phi\circ\mathcal G_1(v_k)=\lim\mathcal G_2(w_k)=\mathcal G_2(w_{q_0}).
$$
Since the point $x_1$ lies in the interior of $\tilde M_1$, by Lemma~\ref{imm} the left-hand side above is non-zero, while since $q_0\in\partial\tilde M_2$, the right-hand side vanishes. Thus, we arrive at a contradiction.
\end{proof}

\noindent\underline{\em Step~2.}
Now we analyse the images $\mathcal R_i$ of the maps $\mathcal G_i$ in $W^{\ell,2}(\mathcal E)$, where $i=1,2$. Take a non-zero vector $v_{x_1}\in(\tilde E_1)_{x_1}$, and let $x_2\in\tilde M_2$ and $w_{x_2}\in(\tilde E_2)_{x_2}$ be a point and a vector respectively that satisfy the conclusions of Lemma~\ref{step1}. In particular, the vectors $\Phi\circ\mathcal G_1(v_{x_1})$ and $\mathcal G_2(w_{x_2})$ coincide in $W^{\ell,2}(\mathcal E)$, and we denote this value by $u$. By Lemma~\ref{imm} we see that locally the sets $\Phi(\mathcal R_1)$ and $\mathcal R_2$ are submanifolds in $W^{\ell,2}(\mathcal E)$, whose tangent spaces can be viewed as the images of the differentials $D(\Phi\circ\mathcal G_1)$ and $D\mathcal G_2$. Combining this with Lemma~\ref{step1}, we conclude that the tangent spaces $T_u\Phi(\mathcal R_1)$ and $T_u\mathcal R_2$ coincide as subspaces in $W^{\ell,2}(\mathcal E)$. Using the inverse function theorem we may view $\Phi(\mathcal R_1)$ and $\mathcal R_2$ locally near $u$ as graphs of smooth functions defined on an open subset in
$$
\mathcal V=T_u\Phi(\mathcal R_1)=T_u\mathcal R_2.
$$
In more detail, let $\Pi$ be the orthogonal projection onto $\mathcal V$ in $W^{\ell,2}(\mathcal E)$, and consider the map 
$$
\Pi\circ\mathcal G_2:\tilde E_2\to\mathcal V,\qquad v_x\longmapsto \Pi(\langle v_x,\tilde G_2(x,\cdot)\rangle).
$$
By Lemma~\ref{imm}, its differential is an isomorphism near $u$, and hence, there exists a $C^1$-smooth inverse map $H_2:\mathcal O\to\tilde E_2$, defined in the neighbourhood $\mathcal O$ of $\Pi (u)$ in $\mathcal V$. Then, it is straightforward to see that near $u$ the image $\mathcal R_2$ is the graph of the map
$$
F_2:\mathcal O\to\mathcal V^\bot,\qquad \upsilon\longmapsto\mathcal G_2(H_2(\upsilon))-\upsilon,
$$
where $\mathcal V^\bot$ is the orthogonal complement of $\mathcal V$ in $W^{\ell,2}(\mathcal E)$. Similarly, one shows that there exists a $C^1$-smooth map $H_1:\mathcal O\to\tilde E_1$, which we may assume is defined on the same set $\mathcal O$, such that near $u$ the image $\Phi(\mathcal R_1)$ is the graph of the map
$$
F_1:\mathcal O\to\mathcal V^\bot,\qquad \upsilon\longmapsto\Phi\circ \mathcal G_1(H_1(\upsilon))-\upsilon.
$$
From this construction we see that the vectors $v_{x_1}\in(\tilde E_1)_{x_1}$ and $w_{x_2}\in(\tilde E_2)_{x_2}$ are precisely the images  $H_1\circ\Pi(u)$ and $H_2\circ\Pi(u)$, and moreover, the isomorphism $J$ has the form $H_2\circ H_1^{-1}$ on the open subset
\begin{equation}
\label{omega1}
\Omega_1=H_1(\mathcal O)\cap\tilde\pi_1^{-1}(B_1)\subset\tilde E_1,
\end{equation}
where $\tilde\pi_1:\tilde E_1\to\tilde M_1$ is the vector bundle projection.

For the sequel we need the following lemma.
\begin{lemma}
\label{step2}
The maps $H_i:\mathcal O\to\tilde E_i$ constructed above, where $i=1,2$, are real-analytic in a neighbourhood of $\Pi(u)$ in $\mathcal V$. In particular, there exists a neighbourhood of $v_{x_1}$ in $\tilde E_1$ such that the map $H_2\circ H_1^{-1}$ is real-analytic on it.
\end{lemma}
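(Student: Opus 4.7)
The plan is to deduce real-analyticity of $H_i$ from the real-analytic inverse function theorem in Banach spaces, applied to $\Pi\circ\mathcal G_i$ at $v_{x_1}$ (respectively $w_{x_2}$). The differential of $\Pi\circ\mathcal G_i$ at the relevant base-point is already a Banach space isomorphism onto $\mathcal V$ by the construction preceding the statement, so the essential new work is to promote $\mathcal G_i$ itself to a real-analytic map from a neighbourhood of its base-point in $\tilde E_i$ into $W^{\ell,2}(\mathcal E)$.

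First I would establish joint real-analyticity of the Dirichlet Green kernel $\tilde G_i(x,y)$ on the off-diagonal set in $\tilde M_i\times\tilde M_i$. Since the metric, Euclidean structure, and connection are all real-analytic, the coefficients of $\Delta^{E_i}$ are analytic, and $\tilde G_i$ admits the usual parametrix decomposition $\tilde G_i = G_0+G_1$, where the singular part $G_0$ is an explicit jointly real-analytic parametrix and the correction $G_1$ is smooth and solves an elliptic system with real-analytic right-hand side, hence is jointly real-analytic by Petrowsky's theorem. Next, fix a non-zero vector $v_{x_1}\in(\tilde E_1)_{x_1}$ and the corresponding $w_{x_2}\in(\tilde E_2)_{x_2}$ produced by Lemma~\ref{step1}. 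Since $x_1\notin\bar U$ (as noted just before Step~1) and likewise $x_2\notin\bar U$, the sets $\{x_i\}\times\bar U$ are bounded away from the diagonal; combined with compactness of $\bar U$, we obtain uniform Cauchy-type estimates on $\partial_x^\alpha \tilde G_i(x,y)$ for $y\in\bar U$ and $x$ in a small neighbourhood of $x_i$. These estimates show that $x\mapsto \tilde G_i(x,\cdot)$ is real-analytic into $C^k(\mathcal E)$ for every $k$, and in particular into $W^{\ell,2}(\mathcal E)$. Coupling this with the obviously real-analytic linear dependence on the fibre variable, $\mathcal G_i$ is real-analytic near $v_{x_1}$ (respectively $w_{x_2}$).

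With analyticity of $\mathcal G_i$ in hand, composition with the bounded orthogonal projection $\Pi$ produces a real-analytic map $\Pi\circ\mathcal G_i$, and the real-analytic inverse function theorem for Banach spaces (proved via majorant series, see, e.g., Whittlesey, \emph{Analytic functions in Banach spaces}) yields real-analytic local inverses $H_i:\mathcal O_i\to\tilde E_i$ on neighbourhoods $\mathcal O_i\subset\mathcal V$ of $\Pi(u)$. Intersecting to a common $\mathcal O\subset\mathcal O_1\cap\mathcal O_2$ gives the first assertion; the composition $H_2\circ H_1^{-1}$ is then real-analytic on $H_1(\mathcal O)$, a neighbourhood of $v_{x_1}$ in $\tilde E_1$, as a composition of real-analytic maps. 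The main technical obstacle in this plan is the second step, namely upgrading pointwise joint analyticity of $\tilde G_i$ to analyticity of the Banach-valued map $x\mapsto \tilde G_i(x,\cdot)\in W^{\ell,2}(\mathcal E)$; this upgrade relies on the uniform control of $x$-derivatives over $\bar U$, which in turn depends essentially on the positive separation between $x_i$ and $\bar U$ together with the compactness of $\bar U$.
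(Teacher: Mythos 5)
Your overall plan---establish real-analyticity of $\Pi\circ\mathcal G_i$ near the relevant base-point, then invert---matches the paper's strategy, and your identification of the role played by the positive separation between $x_i$ and $\bar U$ is exactly right. Where you diverge is in \emph{how} the analyticity of $\Pi\circ\mathcal G_i$ is established, and the paper's route is substantially shorter. You aim for the strongest possible statement: real-analyticity of $\mathcal G_i$ as a map into the infinite-dimensional space $W^{\ell,2}(\mathcal E)$, derived from joint real-analyticity of the Dirichlet Green kernel off the diagonal together with uniform Cauchy estimates. The paper instead exploits the fact that $\mathcal V=T_u\Phi(\mathcal R_1)=T_u\mathcal R_2$ is \emph{finite-dimensional} (of dimension $n+\rank E$), so only the finitely many scalar coordinate functions $(\mathcal G_2,\varphi_i)_{\ell,2}$, $i=1,\ldots,m$, need to be analytic. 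Each of these is represented by
\[
x\longmapsto \int_U \langle\tilde G_2(x,y),f_i(y)\rangle_{y}\,d\vol(y),
\]
with $f_i\in W_0^{-\ell,2}(\mathcal E)$ dual to $\varphi_i$; by the symmetry of the Green kernel this integral is a \emph{harmonic} section of $\tilde E_2$ in $x$ on any open set disjoint from $\bar U$, and harmonic sections of a real-analytic elliptic system are automatically real-analytic. No joint analyticity of the Green kernel, no parametrix decomposition, and no Banach-valued analytic inverse function theorem (the finite-dimensional one suffices since $\Pi\circ\mathcal G_i$ is a map between finite-dimensional spaces).

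Your route, while plausible, leans on two technical claims that you sketch rather than prove, and which the paper's argument is specifically designed to avoid. First, joint real-analyticity of the \emph{Dirichlet} Green kernel off the diagonal: the parametrix correction $G_1=\tilde G_i-G_0$ solves a boundary value problem, not just an interior elliptic system with analytic right-hand side, so you need analytic regularity up to the boundary (Morrey--Nirenberg type results for systems), plus an argument for analyticity jointly in $(x,y)$, neither of which follows from Petrowsky's interior theorem alone. Second, the upgrade from pointwise joint analyticity to analyticity of the Banach-valued map $x\mapsto\tilde G_i(x,\cdot)$---you rightly flag this as the ``main technical obstacle,'' and the uniform Cauchy estimates you invoke require exactly the joint analyticity from the first step. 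These are both true statements, but filling them in carefully would make the proof considerably longer than necessary. The paper's observation that it suffices to look at finitely many coordinate functions, each of which is harmonic in $x$ and thus analytic essentially for free, is the piece of leverage you are missing. It is worth noting that both your proposal and the paper's text quietly assume $x_2\notin\bar U$ in addition to $x_1\notin\bar U$; this is true (if $x_2$ were in $\bar U$, injectivity of $\mathcal G_1$ from Lemma~\ref{imm} together with the intertwining relation~\eqref{intert} would force $x_1\in\bar W_1$, a contradiction) but deserves a sentence.
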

\begin{proof}
Choosing an orthonormal basis $(\varphi_i)$ in $\mathcal V$, where $i=1,\ldots, m$, we may identify the vector space $\mathcal V$ with $\mathbb R^m$. First, we claim that the map $\Pi\circ\mathcal G_2:\tilde E_2\to\mathcal V\simeq\mathbb R^m$ is real-analytic in a neighbourhood of $w_{x_2}$, that is the coordinate functions, given by products
$$
(\Pi\circ\mathcal G_2,\varphi_i)_{\ell,2}=(\mathcal G_2,\varphi_i)_{\ell,2},\qquad\text{where}\quad i=1,\ldots, m,
$$ 
and $(\cdot,\cdot)_{\ell,2}$ stands for the scalar product in $W^{\ell,2}(\mathcal E)$, are real-analytic. By definition of $\mathcal G_2$ for the latter it is sufficient to show that the sections
$$
x\longmapsto (\tilde G_2(x,\cdot),\varphi_i)_{\ell,2}\in E_x\qquad\text{where}\quad i=1,\ldots, m,
$$
are real-analytic in a neighbourhood of $x_2$. Let $f_i\in W^{-\ell,2}_0(\mathcal E)$ be a vector dual to $\varphi_i$, that is such that $\varphi_i(s)=(s,f_i)_{-\ell,2}$ for any $s\in W^{-\ell,2}_0(\mathcal E)$. Since the canonical map $f\mapsto (\cdot,f)_{-\ell,2}$ preserves scalar products, we conclude that
$$
(\tilde G_2(x,\cdot),\varphi_i)_{\ell,2}=\int\limits_U\langle\tilde G_2(x,y),f_i(y)\rangle_{y,\tilde E_2}\mathit{dVol}_g(y).
$$
Recall that the point $x_1$ does not lie in the closure $\bar U\subset\tilde M_2$. Then, by properties of the Green kernel, it is straightforward to see that the integral on the right-hand side above defines a harmonic section in any neighbourhood of $x_1$ that is disjoint with $U$. As was discussed in Section~\ref{prems}, any harmonic section is real-analytic under our hypotheses, and we conclude that so is the integral above. Thus, the coordinate functions $(\mathcal G_2,\varphi_i)_{\ell,2}$ are real-analytic in a neighbourhood of $x_1$ for all $i=1,\ldots, m$. Further, we conclude that the map $H_2$, as the inverse map to $\Pi\circ\mathcal G_2$, is also real-analytic in a neighbourhood of $\Pi(u)$.

A similar argument shows that the maps $\Pi\circ\Phi\circ\mathcal G_1$ and $H_1$ are real-analytic as well. Hence, the map $H_2\circ H_1^{-1}$ is real-analytic as the composition of real-analytic maps.
\end{proof}

\noindent\underline{\em Step~3.} Now we claim that the images of $\Phi(\mathcal R_1)$ and $\mathcal R_2$ coincide around the point $u$. This is the consequence of the following lemma.
\begin{lemma}
\label{step3a}
The maps $F_i:\mathcal O\to\mathcal V^\bot$ constructed above, where $i=1,2$, coincide in a neighbourhood of $\Pi(u)$ in $\mathcal V$.
\end{lemma}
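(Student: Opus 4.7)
The plan is to locate a non-empty open subset of $\mathcal{O}$ on which $F_1$ and $F_2$ visibly coincide, and then promote this equality to all of $\mathcal{O}$ by the real-analyticity already established in Lemma~\ref{step2}. First, I would check that $\Omega_1$ from \eqref{omega1} is non-empty and that $\mathcal{O}':=H_1^{-1}(\Omega_1)$ accumulates at $\Pi(u)$. The inverse function theorem presents $H_1$ as a $C^1$ diffeomorphism from $\mathcal{O}$ onto an open neighborhood $\mathcal{N}$ of $v_{x_1}$ in $\tilde E_1$, with $H_1(\mathcal{O})=\mathcal{N}$. Since $x_1\in\partial B_1$, I can choose a sequence $p_k\in B_1$ with $p_k\to x_1$ and, using any local trivialization of $\tilde E_1$ near $x_1$, lift it to vectors $v_k\in(\tilde E_1)_{p_k}$ converging to $v_{x_1}$. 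For large $k$ one has $v_k\in\mathcal{N}\cap\tilde\pi_1^{-1}(B_1)=\Omega_1$, so $\mathcal{O}'$ is a non-empty open subset of $\mathcal{O}$ whose closure contains $\Pi(u)$.

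Next I would verify that $F_1\equiv F_2$ on $\mathcal{O}'$. Given $\upsilon\in\mathcal{O}'$, the point $v:=H_1(\upsilon)$ lies in $\tilde\pi_1^{-1}(B_1)$, so $J(v)$ is defined through~\eqref{J:def} and $\Phi\circ\mathcal{G}_1(v)=\mathcal{G}_2(J(v))$. After shrinking $\mathcal{O}$ if necessary (using the continuous extension of $J$ at $v_{x_1}$ from Lemma~\ref{step1} to keep this common vector close to $u$), both $\Phi(\mathcal{R}_1)$ and $\mathcal{R}_2$ are graphs over $\mathcal{V}$ in a neighborhood of the vector. The defining identity $\Pi\circ\Phi\circ\mathcal{G}_1\circ H_1=\mathrm{id}_{\mathcal{O}}$ shows that this vector lies on the graph of $F_1$ at $\upsilon$, and uniqueness of the $\mathcal{V}\oplus\mathcal{V}^{\perp}$ decomposition forces it to lie on the graph of $F_2$ at the same base point $\upsilon$ as well. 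Hence $F_1(\upsilon)=F_2(\upsilon)$ throughout $\mathcal{O}'$.

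Finally I would invoke real-analytic unique continuation. Taking $\mathcal{O}$ to be a connected ball about $\Pi(u)$, it suffices to show that for every $\varphi\in W^{\ell,2}(\mathcal{E})$ the scalar function $\upsilon\mapsto(F_1(\upsilon)-F_2(\upsilon),\varphi)_{\ell,2}$ is real-analytic on $\mathcal{O}$. This follows exactly as in Lemma~\ref{step2}: representing $\varphi$ by a dual element $f_\varphi\in W^{-\ell,2}_0(\mathcal{E})$ supported in $U$ reduces the pairing to an integral $\int_U\langle\tilde G_i(H_i(\upsilon),y),f_\varphi(y)\rangle_{y,\tilde E_i}\mathit{dVol}_g(y)$, which is harmonic, and hence real-analytic, in $H_i(\upsilon)$ on any open set of $\tilde M_i$ disjoint from $\bar U$; and by construction $x_1\notin\bar U$. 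This scalar function vanishes on the non-empty open subset $\mathcal{O}'\subset\mathcal{O}$, so by connectedness of $\mathcal{O}$ and real-analytic continuation it vanishes on all of $\mathcal{O}$; since $\varphi$ was arbitrary, $F_1=F_2$ on $\mathcal{O}$. The main obstacle I anticipate is the graph-matching in Step~2, which requires Lemma~\ref{step1} to place $\mathcal{G}_2(J(H_1(\upsilon)))$ in the same local graph chart as $\Phi\circ\mathcal{G}_1(H_1(\upsilon))$; once this is arranged, the remainder is a direct application of analyticity already developed in Lemma~\ref{step2}.
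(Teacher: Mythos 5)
Your proposal is correct and follows essentially the same route as the paper: establish that $F_1=F_2$ on $\mathcal{O}'=H_1^{-1}(\Omega_1)$ via the identity $J=H_2\circ H_1^{-1}$ there, verify real-analyticity of the coordinate pairings of $F_i$ exactly as in Lemma~\ref{step2}, and then use analytic continuation on the connected set $\mathcal{O}$. The only differences are cosmetic: you spell out why $\mathcal{O}'$ is non-empty and accumulates at $\Pi(u)$, make explicit the graph-matching (shrinking $\mathcal{O}$ via Lemma~\ref{step1} so that $J(H_1(\upsilon))\in H_2(\mathcal{O})$), and pair against arbitrary $\varphi\in W^{\ell,2}(\mathcal{E})$ rather than an orthonormal basis of $\mathcal{V}^\bot$, none of which changes the substance of the argument.
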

\begin{proof}
Fix an orthonormal basis $(\varphi_j)$ in $\mathcal V^\bot$, where $j=1,2,\ldots,\infty$. For a proof of the lemma it is sufficient to show that the coordinate functions $(F_1,\varphi_j)_{\ell,2}$ and $(F_2,\varphi_j)_{\ell,2}$ coincide for all $j=1,2,\ldots,\infty$, where $(\cdot,\cdot)_{\ell,2}$ is the scalar product in $W^{\ell,2}(\mathcal E)$. Note that
\begin{equation}
\label{eq:f2}
(F_2,\varphi_j)_{\ell,2}(\upsilon)=(\mathcal G_2,\varphi_j)_{\ell,2}\circ H_2(\upsilon)-(\upsilon,\varphi_j)_{\ell,2}
\end{equation}
for any $\upsilon\in\mathcal O$. The argument used in the proof of Lemma~\ref{step2} shows that the function $(\mathcal G_2,\varphi_j)_{\ell,2}$ is real-analytic in some neighbourhood of $v_{x_1}$, and by Lemma~\ref{step2} we also know that the map $H_2$ is real-analytic in a neighbourhood of $\Pi(u)$. Since the second term on the right-hand side of~\eqref{eq:f2} is linear in $\upsilon$, we conclude that the function $(F_2,\varphi_j)_{\ell,2}$ is real-analytic in a neighbourhood of $\Pi(u)$, which we may also denote by $\mathcal O$. This statement holds for all values $j=1,2,\ldots,\infty$, with the same set $\mathcal O$. 

Similarly, one shows that all functions 
\begin{equation}
\label{eq:f1}
(F_1,\varphi_j)_{\ell,2}(\upsilon)=(\Phi\circ\mathcal G_1,\varphi_j)_{\ell,2}\circ H_1(\upsilon)-(\upsilon,\varphi_j)_{\ell,2}
\end{equation} 
are also real-analytic on the same set $\mathcal O$. Without loss of generality, we may assume that the open set $\mathcal O$ is connected. Now by the choice of the point $x_1$, we know that the maps $\Phi\circ\mathcal G_1$ and $\mathcal G_2\circ J$ coincide on an open subset $\Omega_1\subset\tilde E_1$, defined in~\eqref{omega1}, whose closure contains $v_{x_1}$. Recall that the map $J$ coincides with $H_2\circ H_1^{-1}$ on $\Omega_1$, and hence, the maps $\Phi\circ\mathcal G_1\circ H_1$ and $\mathcal G_2\circ H_2$ coincide on $H_1^{-1}(\Omega_1)\subset\mathcal O$. Combining the latter with relations~\eqref{eq:f2} and~\eqref{eq:f1}, we conclude that the real-analytic functions $(F_1,\varphi_j)_{\ell,2}$ and $(F_2,\varphi_j)_{\ell,2}$ coincide on an open subset $H_1^{-1}(\Omega_1)\subset\mathcal O$, and hence, by unique continuation coincide on $\mathcal O$ for all $j=1,2,\ldots,\infty$. Thus, we are done.
\end{proof}

Due to conical structure of the images $\Phi(\mathcal R_1)$ and $\mathcal R_2$, from the above we conclude that there are conical neighbourhoods of $u$, that is neighbourhoods invariant under multiplication by $t>0$, that coincide. In fact, as the following lemma shows, even a stronger statement holds.
\begin{lemma}
\label{step3b}
There is a neighbourhood $O_1$ of the point $x_1\in\tilde M_1$ such that for any $x\in O_1$ there exists $z\in\tilde M_2$ such that the images of fibres $\Phi\circ\mathcal G_1((\tilde E_1)_x)$ and $\mathcal G_2((\tilde E_2)_{z})$ coincide.
\end{lemma}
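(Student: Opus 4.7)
The plan is to promote the local coincidence of $\Phi(\mathcal R_1)$ and $\mathcal R_2$ near $u$, established in Lemma~\ref{step3a}, to a fibrewise statement about the base manifolds. Fix a conical neighbourhood $N_u$ of $u$ in $W^{\ell,2}(\mathcal E)$ on which $\Phi(\mathcal R_1)\cap N_u=\mathcal R_2\cap N_u$, which exists by the conical remark preceding the lemma. Since $\mathcal G_2$ is an injective immersion on $\tilde E_2^0$ by Lemma~\ref{imm}, its inverse $\mathcal G_2^{-1}$ is well-defined and continuous on $\mathcal R_2\cap N_u$, and the composition $J:=\mathcal G_2^{-1}\circ\Phi\circ\mathcal G_1$ is continuous on the open set $N:=(\Phi\circ\mathcal G_1)^{-1}(N_u)\subset\tilde E_1^0$, which contains $v_{x_1}$. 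The map $J$ satisfies $J(tv)=tJ(v)$ for all $t>0$ by the linearity of $\mathcal G_1$ and $\mathcal G_2$ on fibres. Shrinking $N_u$ if necessary, I assume that $J(N)$ stays in a neighbourhood of $w_{x_2}$ lying over the interior of $\tilde M_2$, and that $N$ contains a fibrewise convex neighbourhood $N'$ of $v_{x_1}$ whose base projection lies in a small neighbourhood $O_1$ of $x_1$ in the interior of $\tilde M_1$.

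The heart of the argument is to show that $\tilde\pi_2\circ J$ is constant on $N'\cap(\tilde E_1)_x$ for each $x\in O_1$. Given two vectors $v_0,v_1\in N'\cap(\tilde E_1)_x$, consider the segment $v_s=(1-s)v_0+sv_1$, which stays in $N'$ by convexity, and set $y_s:=\tilde\pi_2(J(v_s))$, a continuous curve of interior points of $\tilde M_2$. By linearity of $\Phi\circ\mathcal G_1$ on the fibre,
$$
\mathcal G_2(J(v_s))=\Phi\circ\mathcal G_1(v_s)=(1-s)\mathcal G_2(J(v_0))+s\mathcal G_2(J(v_1)),
$$
so $\mathcal G_2(J(v_s))$ lies in $\mathcal G_2((\tilde E_2)_{y_0})+\mathcal G_2((\tilde E_2)_{y_1})$. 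Suppose $y_0\ne y_1$: a preliminary application of Lemma~\ref{sum}, comparing $(\tilde E_2)_{y_0}$ and $(\tilde E_2)_{y_1}$ against any auxiliary third interior point, forces $\mathcal G_2((\tilde E_2)_{y_0})\cap\mathcal G_2((\tilde E_2)_{y_1})=\{0\}$, so the sum is direct. Lemma~\ref{sum} then forces $y_s\in\{y_0,y_1\}$ for every $s\in[0,1]$, and combined with the continuity of $s\mapsto y_s$ and connectedness of $[0,1]$, this makes $y_s$ constant, contradicting $y_0\ne y_1$. Hence $y_0=y_1$, which unambiguously defines a map $\bar{\jmath}:O_1\to\tilde M_2$ by $\bar{\jmath}(x):=\tilde\pi_2(J(v))$ for any $v\in N'\cap(\tilde E_1)_x$.

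To finish, I extend the fibrewise inclusion $\Phi\circ\mathcal G_1(N'\cap(\tilde E_1)_x)\subseteq\mathcal G_2((\tilde E_2)_{\bar{\jmath}(x)})$ to the whole fibre $(\tilde E_1)_x$. For an arbitrary $v'\in(\tilde E_1)_x$ and any fixed $v\in N'\cap(\tilde E_1)_x$, the vector $v+\varepsilon v'$ lies in $N'$ for small $\varepsilon>0$, and linearity of $\Phi\circ\mathcal G_1$ on the fibre gives
$$
\varepsilon\,\Phi\circ\mathcal G_1(v')=\Phi\circ\mathcal G_1(v+\varepsilon v')-\Phi\circ\mathcal G_1(v)\in\mathcal G_2((\tilde E_2)_{\bar{\jmath}(x)}),
$$
whence $\Phi\circ\mathcal G_1(v')\in\mathcal G_2((\tilde E_2)_{\bar{\jmath}(x)})$. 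Thus $\Phi\circ\mathcal G_1((\tilde E_1)_x)\subseteq\mathcal G_2((\tilde E_2)_{\bar{\jmath}(x)})$, and equality follows from a dimension count: both subspaces have dimension $\rank E$ since $\Phi$ is a fibrewise isomorphism and $\mathcal G_1,\mathcal G_2$ are linear embeddings on interior fibres by Lemma~\ref{imm}. Taking $z=\bar{\jmath}(x)$ completes the proof. The main obstacle is the dichotomy in the second paragraph, where the interior placement of $y_s$, the continuity of $s\mapsto y_s$, and the direct-sum hypothesis of Lemma~\ref{sum} have to be engineered simultaneously; everything else is a matter of linearity and openness.
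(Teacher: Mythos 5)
Your proof is correct and reaches the same conclusion, but it organises the argument genuinely differently from the paper, even though both turn on Lemma~\ref{sum}. The paper argues algebraically and in a single pass: having found a $z$ with $\Phi\circ\mathcal G_1(v_x)\in\mathcal G_2((\tilde E_2)_z)$, it supposes another $w_x$ in the fibre maps into $\mathcal G_2((\tilde E_2)_y)$ with $y\ne z$, notes that $\Phi\circ\mathcal G_1(w_x-v_x)$ lies both in $\mathcal G_2((\tilde E_2)_z)\oplus\mathcal G_2((\tilde E_2)_y)$ and in some $\mathcal G_2((\tilde E_2)_q)$, applies Lemma~\ref{sum} to force $q\in\{z,y\}$, and derives a contradiction in either case. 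You instead define the local inverse $J=\mathcal G_2^{-1}\circ\Phi\circ\mathcal G_1$ on a fibrewise convex neighbourhood $N'$ of $v_{x_1}$, run Lemma~\ref{sum} together with the continuity of $s\mapsto y_s$ and the connectedness of $[0,1]$ along the segment $v_s=(1-s)v_0+sv_1$ to show that $\tilde\pi_2\circ J$ is constant on each slice $N'\cap(\tilde E_1)_x$, and only then pass to the full fibre via the linearity identity $\varepsilon\,\Phi\circ\mathcal G_1(v')=\Phi\circ\mathcal G_1(v+\varepsilon v')-\Phi\circ\mathcal G_1(v)$. What your route buys is a clean separation between the part of the fibre that visibly lands in $\mathcal R_2$ (the convex slice near $v_{x_1}$) and the rest of the fibre, which you reach by pure linear algebra; the paper's version implicitly treats the images of arbitrary $w_x$ and of $w_x-v_x$ as automatically lying in some $\mathcal G_2((\tilde E_2)_q)$, which is only immediate for vectors in the cone over $H_1(\mathcal O)$, so your two-stage argument is actually the more careful of the two.

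Two small repairs. First, the triviality of $\mathcal G_2((\tilde E_2)_{y_0})\cap\mathcal G_2((\tilde E_2)_{y_1})$ for $y_0\ne y_1$ is simply the global injectivity of $\mathcal G_2$ on $\tilde E_2^0$ from Lemma~\ref{imm}; your ``preliminary application of Lemma~\ref{sum}, comparing against an auxiliary third interior point'' is an unnecessary detour and, as phrased, not an application of that lemma at all. Second, the continuity of $\mathcal G_2^{-1}$ on $\mathcal R_2\cap N_u$ -- and hence of $s\mapsto y_s$ -- does not follow merely from $\mathcal G_2$ being an injective immersion; you should note that near $u$ the inverse agrees with the $C^1$ local inverse $H_2\circ\Pi$ produced by the inverse function theorem in Step~2, which is what supplies the continuity. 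With these two points clarified the proof is sound.
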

\begin{proof}
Choose a neighbourhood $O_1$ of $x_1\in\tilde M_1$ such that $O_1\subset\tilde\pi_1\circ H_1(\mathcal O)$, where $\tilde\pi_1:\tilde E_1\to\tilde M_1$ is the vector bundle projection. We intend to show that for any $x\in O_1$ there exists $z\in\tilde M_2$ such that the image $\Phi\circ\mathcal G_1((\tilde E_1)_x)$ lies in $\mathcal G_2((\tilde E_2)_{z})$. Since these images are vector spaces of the same dimension, the statement of the lemma follows immediately.

First, for a given point $x\in O_1$ and a vector $v_x\in(\tilde E_1)_x$, the considerations above show that the image $\Phi\circ\mathcal G_1(v_x)$ lies in the set $\mathcal G_2(CH_2(\mathcal O))$, where $CH_2(\mathcal O)$ is a conical open set,
$$
CH_2(\mathcal O)=\{tw\in\tilde E_2:~ t\in\mathbb R,~ t>0, \text{ and } w\in H_2(\mathcal O) \}.
$$
Thus, there exists $z\in\tilde M_2$ such that $\Phi\circ\mathcal G_1(v_x)$ lies in $\mathcal G_2((\tilde E_2)_{z})$. We claim that for any $w_x\in(\tilde E_1)_x$ its image $\Phi\circ\mathcal G_1(w_x)$ lies in the same subspace $\mathcal G_2((\tilde E_2)_{z})$.

Suppose the contrary, that is there exists a non-zero vector $w_x\in(\tilde E_1)_x$ such that its image $\Phi\circ\mathcal G_1(w_x)$ lies in $\mathcal G_2((\tilde E_2)_{y})$, where $z\ne y$. Then, we see that 
$$
\Phi\circ\mathcal G_1(w_x-v_x)\in\mathcal G_2((\tilde E_2)_{z})\oplus\mathcal G_2((\tilde E_2)_{y}).
$$
Since the vectors $v_x$ and $w_x$ are different, arguing as above, we may find another point $q\in\tilde M_2$ such that $\Phi\circ\mathcal G_1(w_x-v_x)$ lies in $\mathcal G_2((\tilde E_2)_{q})$. Now by Lemma~\ref{sum} we conclude that the point $q$ coincides with either $z$ or $y$, and in each case it is straightforward to arrive at a contradiction. For example, if $q=z$, we immediately conclude that the vector
$$
\Phi\circ\mathcal G_1(w_x)=\Phi\circ\mathcal G_1(w_x-v_x)+\Phi\circ\mathcal G_1(v_x)
$$
lies in the image $\mathcal G_2((\tilde E_2)_{z})$, and by Lemma~\ref{imm}, the points $z$ and $y$ coincide. 
\end{proof}

The last lemma shows that the operator
$$
J_x=\mathcal G_2^{-1}\circ\Phi\circ\mathcal G_1: (\tilde E_1)_x\longrightarrow (\tilde E_2)_{z}
$$
is defined for all $x$ in a neighbourhood $O_1$ of the point $x_1$, and by the discussion in Step~2, has to coincide with the map $H_2\circ H_1^{-1}$ in a neighbourhood $H_1(\mathcal O)\cap\tilde\pi_1^{-1}(O_1)$ of a given point $v_{x_1}\in (\tilde E_1)_{x_1}$. Without loss of generality we may assume that the last set is connected. Since the map $H_2\circ H_1^{-1}$ is real-analytic, and is an isometry on the open subset 
$$
\Omega_1\cap H_1(\mathcal O)\cap\tilde\pi_1^{-1}(O_1), 
$$
where $\Omega_1$ is given by~\eqref{omega1}, by unique continuation we conclude that it is an isometry on $H_1(\mathcal O)\cap\tilde\pi_1^{-1}(O_1)$. Performing this argument for all points $v_{x_1}$ from the unit sphere in the fibre $(\tilde E_1)_{x_1}$, we conclude that the operator $J_x$ is an isometry for all $x$ in a neighbourhood of $x_1$. This immediately yields a contradiction with the assumption $B_1\ne\tilde M_1$, since the point $x_1\in\tilde M_1$ has been chosen on the boundary $\partial B_1$. Thus, we conclude that the set $B_1$ coincides with the whole manifold $\tilde M_1$.

\medskip
\noindent\underline{\em Step~4.} Now we collect final conclusions. First, relation~\eqref{J:def} defines the fibre preserving map $J:\tilde E_1\to\tilde E_2$. By the argument in Step~2 we see that locally it can be written in the form $H_2\circ H_1^{-1}$, and hence, is smooth, and by Lemma~\ref{step2} is real-analytic. Since by definition it is an isomorphism on each fibre, we conclude that it is a real-analytic vector bundle isomorphism. In particular, it covers a real analytic map $j:\tilde M_1\to\tilde M_2$. Since it is an isometry on each fibre, it is a Euclidean vector bundle isomorphism.

Note that the isomorphism $J$ coincides with the isomorphism $\Phi$ on fibres over $W_1\subset\tilde M_1$. Since the latter is a gauge equivalence, the connections $J^*\tilde\nabla^2$ and $\tilde\nabla^1$ coincide on $W_1$, and since they are real-analytic and $\tilde M_1$ is connected, they coincide on $\tilde M_1$. Similarly, the map $j:\tilde M_1\to\tilde M_2$ coincides with the isometry $\Psi$ on $W_1$, that is the real-analytic metrics $j^*\tilde g_2$ and $\tilde g_1$ coincide on $W_1$, and hence, they coincide everywhere on $\tilde M_1$. Thus, the vector bundle isomorphism $J$ is indeed a gauge equivalence that covers an isometry.
\qed

\section{Vector bundles over surfaces}
\label{2dim}
\subsection{Proof of Theorem~\ref{t2}}
First, note that we may choose a real-analytic metric $g\in c$. Indeed, viewing $M$ as a domain in a closed surface, by the uniformisation theorem, it is straightforward to see  that there is a constant Gauss curvature  metric $g\in c$. In isothermal coordinates such a metric is real-analytic, and hence, is real-analytic with respect to the real-analytic atlas determined by $c$. Throughout the rest of the section we always use this metric on $M$; first, to define the Dirichlet-to-Neumann operators, and second, to describe the immersions by Green kernels. The construction below follows closely the lines in Section~\ref{immersions}, but uses Proposition~\ref{loc2} instead of Proposition~\ref{loc}.

Fix a point $p\in\Sigma\subset\partial M$. We may view the surface $M$ as a subset of the larger surface $\tilde M$,  obtained by gluing the Euclidean disc $D(0,\rho)$ to $M$ such that the points in the half-disc are identified with the points in $M$ by means of boundary normal coordinates centred at  $p$. Since these coordinates, constructed using our metric $g$, lie in the fixed real-analytic atlas, we conclude that this construction yields the real-analytic structure on $\tilde M$, extending the one on $M$. Making $\rho$ smaller, if necessary, we may assume that the metric $g$ extends to a real-analytic metric $\tilde g$ on $\tilde M$. Below by $W$ we denote the neighbourhood of $p$ in $\tilde M$ that corresponds to the disc $D(0,\rho)$.

Let $E_1$ and $E_2$ be two real-analytic vector bundles over $M$, and suppose that for a given open set $\Sigma\subset\partial M$ there exists a vector bundle isomorphism 
$$
\phi:\left. E_1\right|_{\Sigma}\to \left. E_2\right|_{\Sigma}
$$
that covers the identity map of $\Sigma$ and intertwines with the Dirichlet-to-Neumann operators $\Lambda_{1,\Sigma}$ and $\Lambda_{2,\Sigma}$. Making $W$ smaller, if necessary, we can choose orthonormal frames defined over $W\cap\Sigma$ that are related by $\phi$. They identify trivialisations of $\left. E_1\right|_{\Sigma}$ and $\left. E_2\right|_{\Sigma}$, and extending the latter as trivial vector bundles over $W$, we obtain vector bundles $\tilde E_1$ and $\tilde E_2$ over $\tilde M$. Further, identifying boundary normal frames of $\tilde E_1$ and $\tilde E_2$, we extend $\phi$ to a Euclidean isomorphism
\begin{equation}
\label{phi:ext}
\Phi:\left. E_1\right|_{W}\to \left. E_2\right|_{W}.
\end{equation}
By Proposition~\ref{loc2} the real-analytic connection matrices of $\nabla^1$ and $\nabla^2$ coincide in such frames, and we conclude that $\Phi$ is a gauge equivalence.

As in the proof of Theorem~\ref{t1}, by $U$ we denote the open set $\tilde M\backslash\bar M$, and by $\mathcal E$ the trivial vector bundle over $U$, which coincides with both restrictions of $\tilde E_1$ and $\tilde E_2$ to $U$. We use the same notation $\tilde E_i^0$ for the vector bundle $\tilde E_i$ with the removed zero section over the interior of $\tilde M$. Let $\tilde G_i$ be the Dirichlet Green kernel for the connection Laplacian on $\tilde E_i$ with respect to the metric $\tilde g$ on $\tilde M$. For each $i=1,2$ we define the map $\mathcal G_i:\tilde E_i\to W^{-1,2}(\mathcal E)$ by setting
\begin{equation}
\label{g2dim:def}
\tilde E_x\ni v_x\longmapsto \langle v_x,\tilde G_i(x,\cdot)\rangle_{x,\tilde E}\in W^{-1,2}(\mathcal E),
\end{equation}
where $x\in\tilde M$. It satisfies the conclusions in Lemmas~\ref{smooth}-\ref{sum}.

The repetition of the argument in Section~\ref{immersions} shows that Theorem~\ref{t2} is a consequence of the following statement.
\begin{theorem}
\label{t2a}
Under the hypotheses of Theorem~\ref{t2}, consider the maps $\mathcal {G}_i:\tilde E_i\to W^{-1,2}(\mathcal E)$ defined by~\eqref{g2dim:def}, where $i=1,2$. Suppose that the vector bundle isomorphism $\Phi:\mathcal E\to\mathcal E$, described above, intertwines with the $\mathcal G_i$'s, that is 
$$
\mathcal G_2\circ\Phi=\Phi\circ\mathcal G_1\qquad\text{ on~ }~\mathcal E. 
$$
Then the images $\mathcal G_2(\tilde E_2^0)$ and $\Phi\circ\mathcal G_1(\tilde E_1^0)$ coincide as subsets in $W^{-1,2}(\mathcal E)$, and the map $\mathcal G_2^{-1}\circ\Phi\circ\mathcal G_1:\tilde E_1^0\to \tilde E_2^0$ extends  to a real-analytic vector bundle isomorphism $J:\tilde E_1\to\tilde E_2$ that covers the identity map of $M$ and such that $J^*\tilde\nabla^2=\tilde\nabla^1$.
\end{theorem}

Assuming that the construction described above is the partial case of the setup in Section~\ref{immersions}, such that the extended isomorphism $\Phi$ in~\eqref{phi:ext} covers the identity map, Theorem~\ref{t2a} can actually be derived from Theorem~\ref{t1a}. More precisely, the only statement that needs to be checked is that the isomorphism $J$ in Theorem~\ref{t1a} covers the identity map of $M$. The latter is a consequence of the unique continuation property of real-analytic maps, since $J$ coincides with $\Phi$ over $W$ and the latter covers the identity map.

However, under the hypotheses of Theorem~\ref{t2}, and consequently, of Theorem~\ref{t2a}, the original argument in Section~\ref{immersions} can be simplified, and for reader's convenience we prefer to outline this below, highlighting the differences. 

\subsection{Outline of the proof of Theorem~\ref{t2a}}
Let $B\subset\tilde M$ be the largest connected open set containing the fixed point $p\in\Sigma$ and such that for any $x\in B$ the images of the fibres $\Phi\circ\mathcal G_1((\tilde E_1)_x)$ and $\mathcal G_2((\tilde E_2)_{x})$ coincide. Then, by Lemma~\ref{imm} the composition
\begin{equation}
\label{2:J:def}
J_x=\mathcal G_2^{-1}\circ\Phi\circ\mathcal G_1: (\tilde E_1)_x\longrightarrow (\tilde E_2)_{x}
\end{equation}
is an isomorphism of the fibres, and defines a fibre preserving map $J:\left.\tilde E_1\right|_{B}\to\tilde E_2$. Repeating the argument in Section~\ref{immersions}, we see that $W\subset B$. The aim is to show that $B$ coincides with the interior of $\tilde M$. Note that unlike in the proof of Theorem~\ref{t1a} we do not ask $J_x$ to be an isometry on the fibres. This is related to the fact that Step~1 of the proof is trivial in this case, since we are not trying to determine the topology of $M$.

Suppose the contrary, $B\ne\tilde M$, and pick a point $\bar x\in\partial B$ that lies in the interior of $M$. By continuity of $\Phi\circ\mathcal G_1$ and $\mathcal G_2$, together with Lemma~\ref{imm}, we conclude that the map $J$ extends to $\bar x$. Then there exist non-zero vectors $v_{\bar x}\in(\tilde E_1)_{\bar x}$ and $w_{\bar x}\in(\tilde E_2)_{\bar x}$ such that the images $\Phi\circ\mathcal G_1(v_{\bar x})$ and $\mathcal G_2(w_{\bar x})$ coincide. Denoting this vector by $u$, we can repeat the argument in the proof of Theorem~\ref{t1a}, see Step~2 and Lemma~\ref{step3a} in Step~3, to conclude that the images of $\Phi\circ\mathcal G_1$ and $\mathcal G_2$ coincide around $u$. Then, we have the following version of Lemma~\ref{step3b}.
\begin{lemma}
There is a connected neighbourhood $O$ of the point $\bar x\in\tilde M$ such that for any $x\in O$  the images of fibres $\Phi\circ\mathcal G_1((\tilde E_1)_x)$ and $\mathcal G_2((\tilde E_2)_{x})$ coincide.
\end{lemma}
\begin{proof}
Following the notation and the line of argument in Section~\ref{immersions}, we choose a neighbourhood $O\subset \tilde\pi_1\circ H_1(\mathcal O)$ of $\bar x\in\tilde M$. For a given point $x\in O$ and a vector $v_x\in H_1(\mathcal O)$ denote by $w_z$ the vector $H_2\circ H_1^{-1}(v_x)$. It satisfies the relation
$$
\Phi\circ\mathcal G_1(v_x)=\mathcal G_2(w_z),
$$
and we claim that the point $z$ has to coincide with $x$. Indeed, this follows from the fact that the map $H_2\circ H_1^{-1}$ coincides with $J$ on the open set $\mathcal O\cap\Omega_1$, where $\Omega_1$ is defined by relation~\eqref{omega1}, and in our setup $J$ covers the identity map on the base, that is
\begin{equation}
\label{uc:proj}
\tilde\pi_2\circ H_2\circ H_1^{-1}=\tilde\pi_1\qquad\text{on~ }\quad\mathcal O\cap\Omega_1.
\end{equation}
By Lemma~\ref{step2} the map $H_2\circ H_1^{-1}$ is real-analytic, and so are the vector bundle projections $\tilde\pi_1$ and $\tilde\pi_2$. Thus, by unique continuation we conclude that relation~\eqref{uc:proj} holds on $\mathcal O$, and hence,
$$
z=\tilde\pi_2(w_z)=\tilde\pi_1(v_x)=x.
$$
Now by linearity, it is straightforward to see that the image  of the fibre $\Phi\circ\mathcal G_1((\tilde E_1)_x)$ lies in $\mathcal G_2((\tilde E_2)_{x})$. Since these images are vector subspaces of the same dimension, we are done.
\end{proof}
The last lemma yields a contradiction, and we conclude that $B=\tilde M$. Thus, relation~\eqref{2:J:def} defines the fibre preserving map $J:\tilde E_1\to\tilde E_2$, which covers the identity map of $M$. Other remaining properties of $J$ are obtained by repeating the argument at the end of Section~\ref{immersions}.

\section{Proofs of auxiliary results}
\label{auxs}
\subsection{Proof of Lemma~\ref{smooth}}
Since the map $\mathcal G:\tilde E\to W^{\ell,2}(\mathcal E)$ is linear on each fibre, for a proof of the lemma it is sufficient to show that the map that sends a point $x\in\tilde M$ to the function $\tilde G(x,\cdot)$, viewed as an element in the Sobolev space $W^{\ell,2}$, is smooth. Below we assume that $x$ ranges in a chart on $\tilde M$ where the vector bundle $\tilde E$ is trivial. First, we claim that for any section $\varphi\in W^{-\ell,2}_0(\mathcal E)$ the section
$$
\tilde\psi(x)=\int_{\tilde M}\langle\tilde G(x,y),\tilde\varphi(y)\rangle_yd\vol(y),
$$
is differentiable, where $\tilde\varphi$ is an extension of $\varphi$ by zero, and for any $h\in\mathbb R^n$ the linear functional 
\begin{equation}
\label{diff:def}
\varphi\longmapsto\sum_ih_i\frac{\partial}{\partial x^i}\tilde\psi(x)=\sum_ih_i\frac{\partial}{\partial x^i}\int_{\tilde M}\langle\tilde G(x,y),\tilde\varphi(y)\rangle_yd\vol(y)
\end{equation}
defines an element in $W^{\ell,2}(\mathcal E)$. Indeed, by standard theory the section $\tilde\psi$ can be viewed as the solution to the Dirichlet problem
$$
\Delta^{\tilde E}\tilde\psi=\tilde\varphi,\qquad\left.\tilde\psi\right|_{\partial\tilde M}=0,
$$
in the Sobolev space $W^{-\ell+2,2}_0$, and since $\ell<1-n/2$, it lies in the Holder space $C^{1,\alpha}$ for some $\alpha>0$. To show that the functional defined by~\eqref{diff:def} lies in $W^{\ell,2}(\mathcal E)$, it is sufficient to show that
$$
\abs{\sum_ih_i\frac{\partial}{\partial x^i}\tilde\psi(x)}\leqslant C\abs{h}\abs{\varphi}_{-\ell,2}
$$
for some constant $C>0$, where $\abs{~\cdot~}_{-\ell,2}$ stands for the Sobolev norm. The latter is a direct consequence of the inequality $\abs{\tilde\psi}_{-\ell+2,2}\leqslant C'\abs{\tilde\varphi}_{-\ell,2}$, which follows from standard theory, together with the Sobolev embedding theorem. Thus, we obtain the linear operator
$$
L_x:\mathbb R^n\ni h\longmapsto \sum_ih_i\frac{\partial}{\partial x^i}\tilde G(x,\cdot)\in W^{\ell,2}(\mathcal E),
$$
and claim that it is the differential of the map $x\mapsto\tilde G(x,\cdot)$. In other words, we claim that for any $\varepsilon>0$ the inequality 
$$
\abs{\tilde G(x+h,\cdot)-\tilde G(x,\cdot)-L_x(h)}_{\ell,2}\leqslant\varepsilon\abs{h}
$$
holds, for any $h\in\mathbb R^n$ such that $\abs{h}<\delta$ for an appropriate $\delta>0$. In the notation above, for the latter it is sufficient to show that
\begin{equation}
\label{diff:in}
\abs{\tilde\psi(x+h)-\tilde\psi(x)-\sum_ih_i\frac{\partial}{\partial x^i}\tilde\psi(x)}\leqslant C\abs{h}^{1+\alpha}\abs{\varphi}_{-\ell,2}
\end{equation}
for some positive constants $C$ and $\alpha$, and arbitrary $h\in\mathbb R^n$. Recall the so-called Hadamard formula for $C^1$-smooth functions:
$$
\tilde\psi(x+h)-\tilde\psi(x)=\sum_i\gamma_i(x)h_i,\quad\text{where}\quad\gamma_i(x)=\int\limits_0^1\frac{\partial\tilde\psi}{\partial x^i}(x+th)dt,
$$
and we use a trivialisation of $\tilde E$ to view sections around $x$ as vector functions. Using this relation, we obtain
\begin{multline*}
\abs{\tilde\psi(x+h)-\tilde\psi(x)-\sum_ih_i\frac{\partial}{\partial x^i}\tilde\psi(x)}=\abs{\sum_i(\gamma_i(x)-\frac{\partial}{\partial x^i}\tilde\psi(x))h_i}\\
\leqslant\abs{h}\left(\int_0^1\abs{D\tilde\psi(x+th)-D\tilde\psi(x))}^2dt\right)^{1/2}\!\!\leqslant\abs{\tilde\psi}_{C^{1,\alpha}}\abs{h}^{1+\alpha}\leqslant C''\abs{\tilde\psi}_{-\ell+2,2}\abs{h}^{1+\alpha}\\
\leqslant C'C''\abs{\tilde\varphi}_{-\ell,2}\abs{h}^{1+\alpha}\leqslant C\abs{\varphi}_{-\ell,2}\abs{h}^{1+\alpha},
\end{multline*}
where in the second inequality we estimate the integral via the Holder norm and $\abs{h}^\alpha$, and in the third we use the Sobolev embedding theorem. Thus, relation~\eqref{diff:in} is demonstrated, and we conclude that the map $x\mapsto\tilde G(x,\cdot)$ is differentiable. Finally, for a proof that it is smooth, it remains to show that the map $x\mapsto L_x$ is continuous. The latter is a consequence of the inequality
$$
\abs{\sum_i(\frac{\partial}{\partial x^i}\tilde\psi(x_1)-\frac{\partial}{\partial x^i}\tilde\psi(x_2))h_i}\leqslant C\abs{h}\abs{x_1-x_2}^\alpha\abs{\varphi}_{-\ell,2}
$$
for some positive constants $C$ and $\alpha$, which can be proved in a fashion similar to the one above. Thus, we are done.
\qed

\subsection{Proof of Lemma~\ref{imm}}
First, we show that the map $\mathcal G$ is a linear embedding on each fibre $\tilde E_x$. For otherwise, there exists a point  $x$ in the interior of $\tilde M$ and a non-zero vector $v_x\in \tilde E_x$ such that the product $\langle v_x,\tilde G(x,\cdot)\rangle_x$ equals zero in $W^{\ell,2}(\mathcal E)$. The latter in particular implies that
\begin{equation}
\label{imm:cont}
\langle v_x,\tilde G(x,y)\rangle_x=0\qquad\text{ for all }\quad y\in U\backslash\{x\}.
\end{equation}
Since the left-hand side above is real-analytic, we conclude that relation~\eqref{imm:cont} continues to hold on $\tilde M\backslash\{x\}$. Now let $s\in\mathcal D(\tilde E)$ be a compactly supported section such that $s(x)=v_x$. Then, we obtain
$$
0=\int_{\tilde M}\langle\langle v_x,\tilde G(x,y)\rangle_x,\Delta^{\tilde E} s(y)\rangle_yd\vol(y)=\langle v_x,\int_{\tilde M}\langle\tilde G(x,y),\Delta^{\tilde E}s(y)\rangle_yd\vol(y)\rangle_x=\langle v_x,v_x\rangle,
$$
where we changed the order of operations in independent variables $x$ and $y$ in the second relation, and used the definition of the Dirichlet Green kernel in the third. Thus, we conclude that the vector $v_x$ has to vanish, and the kernel of a linear operator given by~\eqref{g:def} is trivial, that is the map $\mathcal G$ is indeed a linear embedding on each fibre.

A similar argument shows that the map $\mathcal G$ is injective everywhere on $\tilde E^0$. Indeed, suppose that there exist two points $x_1$ and $x_2$ in the interior of $\tilde M$ and non-zero vectors $v_{x_1}$ and $v_{x_2}$ in the fibres over them such that $\langle v_{x_1},\tilde G(x_1,\cdot)\rangle$ and $\langle v_{x_2},\tilde G(x_2,\cdot)\rangle$ coincide in $W^{\ell,2}(\mathcal E)$. Then, it is straightforward to see that
\begin{equation}
\label{imm:h1}
\langle v_{x_1},\tilde G(x_1,y)\rangle_x=\langle v_{x_2},\tilde G(x_2,y)\rangle_x\qquad\text{ for all }\quad y\in U\backslash\{x_1,x_2\}.
\end{equation}
As above, by unique continuation we may assume that relation~\eqref{imm:h1} holds for all $y\in\tilde M\backslash\{x_1,x_2\}$. In addition, since the map $\mathcal G$ is injective on fibres, we may assume that $x_1\ne x_2$. Let $s\in\mathcal D(\tilde E)$ be a compactly supported section such that $s(x_1)=v_{x_1}$ and $s(x_2)=0$. Then, we obtain
\begin{multline*}
\langle v_{x_1},v_{x_1}\rangle=\langle v_{x_1},\int_{\tilde M}\langle\tilde G(x_1,y),\Delta^{\tilde E}s(y)\rangle_yd\vol(y)\rangle_x=\int_{\tilde M}\langle\langle v_{x_1},\tilde G(x_1,y)\rangle_x,\Delta^{\tilde E}s(y)\rangle_yd\vol(y)\\=
\int_{\tilde M}\langle\langle v_{x_2},\tilde G(x_2,y)\rangle_x,\Delta^{\tilde E}s(y)\rangle_yd\vol(y)=\langle v_{x_2},\int_{\tilde M}\langle\tilde G(x_2,y),\Delta^{\tilde E}s(y)\rangle_yd\vol(y)\rangle_x\\=
\langle v_{x_2},s(x_2)\rangle=\langle v_{x_2},0\rangle=0.
\end{multline*}
Thus, the vector $v_{x_1}$ vanishes, and we arrive at a contradiction.

Finally, to show that the map $\mathcal G$ is an immersion we analyse its differential $D_{v_x}:T_{v_x}\tilde E\to W^{\ell,2}(\mathcal E)$. First, note that a connection on the vector bundle $\tilde E$ defines the decomposition of the tangent space $T_{v_x}\tilde E$ as the direct sum $H_{v_x}\oplus\tilde E_x$, where $H_{v_x}$ is the so-called horizontal subspace, see~\cite{GKM}. Since the differential of the projection $\tilde\pi:\tilde E\to\tilde M$ establishes an isomorphism $D_{v_x}\tilde\pi:H_{v_x}\to T_x\tilde M$, we may view tangent vectors from $T_{v_x}\tilde E$ as pairs $(X,\xi)$, where $X\in T_xM$, and $\xi\in\tilde E_x$. With these identifications, it is straightforward to show that
\begin{equation}
\label{imm:d}
D_{v_x}\mathcal G(X,\xi)=\langle v_x,\nabla_X\tilde G(x,\cdot)\rangle_{x,\tilde E}+\langle\xi,\tilde G(x,\cdot)\rangle_{x,\tilde E},
\end{equation} 
where by the covariant derivative $\nabla_X\tilde G(x,\cdot)$ we mean the derivative with respect to the variable $x$ on $\tilde E\boxtimes\tilde E$, that is given by $\nabla_X(u_x\otimes u_y)=\nabla^{\tilde E}_Xu_x\otimes u_y$. Now choosing appropriate test-sections in the fashion similar to the one above, it is straightforward to show that the differential $D_{v_x}\mathcal G$ is injective.  In more detail, assume that the right-hand side of  relation~\eqref{imm:d} equals zero for some $X\in T_xM$ and $\xi\in\tilde E_x$. Then, by unique continuation we may assume that
\begin{equation}
\label{imm:step}
\langle v_x,\nabla_X\tilde G(x,y)\rangle_{x,\tilde E}+\langle\xi,\tilde G(x,y)\rangle_{x,\tilde E}=0\qquad\text{for all}\quad y\in\tilde M\backslash\{x\}.
\end{equation}
Now choosing a compactly supported section $s\in\mathcal D(\tilde E)$ such that $s(x)=\xi$ and $\left.\nabla_X^{\tilde E}s\right|_x=0$, we obtain
\begin{multline*}
0=\int_{\tilde M}\langle\langle v_x,\nabla_X\tilde G(x,y)\rangle_x,\Delta^{\tilde E}s(y)\rangle_yd\vol(y)+\int_{\tilde M}\langle\langle\xi,\tilde G(x,y)\rangle_x,\Delta^{\tilde E}s(y)\rangle_yd\vol(y)\\
=\langle v_x,\nabla_X\int_{\tilde M}\langle\tilde G(x,y),\Delta^{\tilde E}s(y)\rangle_yd\vol(y)\rangle_x+\langle \xi,\int_{\tilde M}\langle\tilde G(x,y),\Delta^{\tilde E}s(y)\rangle_yd\vol(y)\rangle_x\\
=\langle v_x,\nabla_X^{\tilde E}s\rangle+\langle\xi,\xi\rangle=0+\langle\xi,\xi\rangle.
\end{multline*}
Thus, the vector $\xi\in\tilde E_x$ vanishes, and by relation~\eqref{imm:step} we conclude that the term $\langle v_x,\nabla_X\tilde G(x,\cdot)\rangle_x$ equals zero. Now choosing a test-section $s\in\mathcal D(\tilde E)$ such that $\left.\nabla_Xs\right|_x=v_x$, it is straightforward to see that the vector $v_x$ equals zero as well. Thus, the differential $D_{v_x}\mathcal G$ is indeed injective, and we are done.
\qed

\subsection{Proof of Lemma~\ref{sum}}
Suppose the contrary, the point $x$ does not coincide neither with $q_1$ nor with $q_2$. Then there exist non-zero vectors $v_x\in\tilde E_x$, $w_{q_1}\in\tilde E_{q_1}$, and $w_{q_2}\in\tilde E_{q_2}$ such that
\begin{equation}
\label{sum:aux}
\langle v_x,\tilde G(x,y)\rangle_{x,\tilde E}=\langle w_{q_1},\tilde G(q_1,y)\rangle_{q_1,\tilde E}+\langle w_{q_2},\tilde G(q_2,y)\rangle_{q_2,\tilde E}
\end{equation}
for any $y\in U$. Since both parts of this identity are real-analytic functions of $y$, we conclude that it continues to hold for all $y$ in the complement of the points $x$, $q_1$, and $q_2$ in $\tilde M$. Since $x$ does not coincide neither with $q_1$ nor with $q_2$, there exists a smooth section $s\in\mathcal D(\tilde E)$ whose support does not contain $q_1$ and $q_2$, and such that $s(x)=v_x$. Then, by definition of the Dirichlet Green kernel we obtain
\begin{multline*}
\langle v_x,v_x\rangle=\langle v_x,\int_{\tilde M}\langle\tilde G(x,y),\Delta^{\tilde E} s(y)\rangle_yd\vol(y)\rangle_x=\int_{\tilde M}\langle\langle v_x,\tilde G(x,y)\rangle_x,\Delta^{\tilde E}s(y)\rangle_yd\vol(y)\\=
\int_{\tilde M}\langle\langle w_{q_1},\tilde G(q_1,y)\rangle_x,\Delta^{\tilde E}s(y)\rangle_yd\vol(y)+\int_{\tilde M}\langle\langle w_{q_2},\tilde G(q_2,y)\rangle_x,\Delta^{\tilde E}s(y)\rangle_yd\vol(y)\\=
\langle w_{q_1},\int_{\tilde M}\langle\tilde G(q_1,y),\Delta^{\tilde E}s(y)\rangle_yd\vol(y)\rangle_x+\int_{\tilde M}\langle w_{q_2},\langle\tilde G(q_2,y),\Delta^{\tilde E}s(y)\rangle_yd\vol(y)\rangle_x\\=
\langle w_{q_1},s(q_1)\rangle+\langle w_{q_2},s(q_2)\rangle=0,
\end{multline*}
where we used the relation $s(x)=v_x$ in the first equality, identity~\eqref{sum:aux} in the third, and the fact that the section $s$ is chosen so that it vanishes at $q_1$ and $q_2$ in the last. Thus, we conclude that $v_x$ equals zero and arrive at a contradiction.
\qed

\subsection*{Acknowledgements}  
Our interest in inverse problems has been provoked by Slava Kurylev in a few brief discussions, spread over a number of years. The second-named author is grateful to Lauri Oksanen for discussions on related topics and useful references. Finally, we would like to thank the referee for pointing out some inaccuracies and additional references.

\end{document}